\renewcommand*{\backref}[1]{}
\renewcommand*{\backrefalt}[4]{[{\tiny%
		\ifcase #1 Not cited.%
		\or Cited on page~#2.%
		\else Cited on pages #2.%
		\fi%
	}]}
\newcommand{\id}{\mathsf{id}}
\newcommand{\Z}{\mathbb{Z}}
\renewcommand{\P}{\mathbb{P}}
\newcommand{\E}{\mathbb{E}}
\newcommand{\supp}[1]{\mathrm{supp}(#1)}
\newcommand{\std}{S_{\mathrm{std}}}
\newcommand{\neutralize}[1]{\expandafter\let\csname c@#1\endcsname\count@}
\newcommand{\thistheoremname}{}
\newtheorem*{genericthm*}{\thistheoremname}
\newenvironment{namedthm*}[1]
{\renewcommand{\thistheoremname}{#1}%
	\begin{genericthm*}}
	{\end{genericthm*}}
\theoremstyle{plain}
\newtheorem{thm}{Theorem}[section] 
\newtheorem{prop}[thm]{Proposition}
\newtheorem{lem}[thm]{Lemma}
\newtheorem{cor}[thm]{Corollary}
\theoremstyle{definition}
\newtheorem{rem}[thm]{Remark}
\newtheorem{defn}[thm]{Definition} 
\newtheorem{exmp}[thm]{Example} 
\newtheorem{question}[thm]{Question}
\address{Département de Mathématiques et Applications, \'{E}cole Normale Sup\'{e}rieure, PSL Research University, 45 rue d'Ulm, 75005, Paris, France.}
\email{eduardo.silva@ens.fr // edosilvamuller@gmail.com}
\newcommand{\Sym}[1]{\mathrm{FSym}(#1)}
\newcommand{\extsym}[1]{{\mathsf{Shuffler}}(#1)}
\title[The Poisson boundary of lampshuffler groups]{The Poisson boundary of lampshuffler groups}
\keywords{Random walks, Poisson boundary, finitary symmetric groups, locally finite groups}
\date{\today}
\begin{document}
	\author{Eduardo Silva} 
	\maketitle
	\begin{abstract} 
We study random walks on the lampshuffler group $\Sym{H}\rtimes H$, where $H$ is a finitely generated group and $\Sym{H}$ is the group of finitary permutations of $H$. We show that for any step distribution $\mu$ with a finite first moment that induces a transient random walk on $H$, the permutation coordinate of the random walk almost surely stabilizes pointwise. Our main result states that for $H=\Z$, the above convergence completely describes the Poisson boundary of the random walk $(\Sym{\Z}\rtimes \Z,\mu)$.	\end{abstract}
\section{Introduction}\label{introduction}

We study random walks on the semi-direct product $\extsym{H}\coloneqq \Sym{H}\rtimes H$, where $H$ is an infinite countable group and $\Sym{H}$ denotes the group of bijections from $H$ to $H$ that coincide with the identity map outside of a finite set. Here, the action of an element $h\in H$ on a permutation $f\in \Sym{H}$ is defined as $(h\cdot f)(x)=hf(h^{-1}x)$, for $x\in H$. These groups are referred to as \emph{lampshuffler groups}\footnote{The name ``lampshuffler'' seems to have first appeared in \cite{GheysensMonod2022}, and it is used there to refer to the group $\mathrm{FAlt}(H)\rtimes H$, where $\mathrm{FAlt}(H)$ is the group of finitary even permutations of $H$. In this paper we use the name lampshuffler for the group $\Sym{H}\rtimes H$, following \cite{BonnetColinTesseraThomasse2022,TesseraGenevois2024}.} in \cite{GheysensMonod2022,BonnetColinTesseraThomasse2022, TesseraGenevois2024} due to their resemblance to lamplighter groups, and random walks on them are called mixer chains in \cite{Yadin2009}. In Section \ref{section: extensions of finitary symmetric groups} we describe the basic geometric and algebraic structure of the group $\extsym{H}$, and explain that it inherits the properties of $H$ being finitely generated, amenable, or elementary amenable (Lemma \ref{lem: basic properties of FSym(H)rtimes H}).

Let $G$ be a countable group and let $\mu$ be a probability measure on $G$. The \emph{(right) random walk} $(G,\mu)$ is the Markov chain with state space $G$ and with transition probabilities $p(g,h)=\mu(g^{-1}h)$, for $g,h\in G$. We assume that the random walk starts at the identity element $e_G\in G$. Random walks on lampshuffler groups have been studied in the literature. It is shown in \cite{Yadin2009} that the drift function of the simple random walk for the standard generating set on $\extsym{\Z}$ is asymptotically equivalent to $n^{3/4}$. In \cite[Corollary 1.4]{ErschlerZheng2020} it is proved that the F\o lner function of $\extsym{\Z^d}$, $d\ge 1$, is asymptotically equivalent to $n^{n^d}$, and the return probability $\mu^{2n}(e)$ of the simple random walk is shown to be asymptotically $\mathrm{exp}\left( -n^{\frac{d}{d+2}}\log^{\frac{2}{d+2}}n\right)$. Given a random walk $(G,\mu)$ and denoting by $h(\mu)$ its Avez asymptotic entropy (see Subsection \ref{subsection: conditional entropy} for the definition), the problem of ``full realization'' consists on realizing each number in the interval $[0, h(\mu)]$ as the Furstenberg entropy of some ergodic $(G, \mu)$-space. In \cite[Theorem 1.4]{HartmanYadin2018} it is proved that if $H$ is a finitely generated nilpotent group, then the lampshuffler group $\extsym{H}$ has full realization. Lampshuffler groups also appear in \cite{FeldheimSodin2020}, where the ``umpteen operator'' is introduced as a representation-theoretic analog of a random Schrödinger operator, and the property of having Lifshitz tails is linked to the decay of the return probability of the simple random walk on $\extsym{\Z^d}$.


The \emph{Poisson boundary} of a random walk $(G,\mu)$ is a measure space that encodes the asymptotic behavior of the process. It can be defined as the space of ergodic components of the shift map in the space of infinite trajectories. There are several other equivalent definitions of the Poisson boundary of a random walk, and we recall some of them in Section \ref{section: random walks}. In the last decades, there has been extensive research focused on the identification of Poisson boundaries, i.e., the problem of exhibiting an explicit measure space that coincides with the Poisson boundary up to a $G$-equivariant measurable isomorphism. The main result of the current paper is a complete description of the Poisson boundary of $\extsym{\Z}$, for measures $\mu$ with a finite first moment that induce a transient random walk on $\Z$.

Let $\mu$ be a probability measure on $\extsym{H}$ and consider the trajectory of the $\mu$-random walk $(F_n,S_n)\in \extsym{H}$, $n\ge 0$, where $F_n\in \Sym{H}$ is a finitely supported permutation of $H$ and $S_n\in H$. We refer to $\{F_n\}_{n\ge 1}$ as the \emph{permutation coordinate} of the $\mu$-random walk.

Our first result is the following \emph{stabilization lemma}.
\begin{lem}\label{lem: main stabilization lemma} Let $H$ be a finitely generated group, and consider a probability measure $\mu$ on $\extsym{H}$. Suppose that $\mu$ has a finite first moment and that it induces a transient random walk on $H$. Then for any $h\in H$, the values $F_n(h)$, $n\ge 0$, of the permutation coordinate of the random walk almost surely stabilize to a limit value $F_{\infty}(h)$.
\end{lem}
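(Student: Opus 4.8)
The plan is to show that for each fixed $h\in H$ the value $F_n(h)$ changes only finitely many times almost surely, which immediately yields the existence of the limit $F_\infty(h)$. Write the increments of the random walk as $(g_n,t_n)\in\extsym{H}$, $n\ge 1$, i.i.d.\ with law $\mu$, so that $(F_n,S_n)=(F_{n-1},S_{n-1})(g_n,t_n)$ and $S_n=t_1\cdots t_n$ is the projected walk on $H$. From the multiplication rule in the semidirect product one reads off $F_n=F_{n-1}\circ(S_{n-1}\cdot g_n)$, where $S_{n-1}\cdot g_n$ is a finitary permutation whose support is the translate $S_{n-1}\supp{g_n}$. The first key observation is that $F_n(h)\ne F_{n-1}(h)$ forces $h\in\supp{S_{n-1}\cdot g_n}$, equivalently $S_{n-1}^{-1}h\in\supp{g_n}$: indeed, if $h$ is fixed by $S_{n-1}\cdot g_n$, then $F_n(h)=F_{n-1}\bigl((S_{n-1}\cdot g_n)(h)\bigr)=F_{n-1}(h)$. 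Hence the number of steps at which the value at $h$ changes is bounded above by $\#\{\,n\ge 1:S_{n-1}^{-1}h\in\supp{g_n}\,\}$.

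I would then bound the expectation of this count. Since the increment $(g_n,t_n)$ is independent of the past, and in particular of $S_{n-1}$, writing $\phi(y)\coloneqq\P(y\in\supp{g})$ for $(g,t)\sim\mu$ gives
\[
\sum_{n\ge 1}\P\!\left(S_{n-1}^{-1}h\in\supp{g_n}\right)=\sum_{n\ge 1}\sum_{s\in H}\P(S_{n-1}=s)\,\phi(s^{-1}h).
\]
Substituting $y=s^{-1}h$ and exchanging the order of summation, this equals $\sum_{y\in H}\phi(y)\,G(e,hy^{-1})$, where $G(e,z)=\sum_{n\ge 0}\P(S_n=z)$ denotes the Green function of the projected random walk on $H$.

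Two ingredients then finish the argument. First, transience of the walk on $H$ yields the uniform bound $G(e,z)\le G(e,e)<\infty$ for all $z\in H$, via the renewal identity $G(e,z)=F(e,z)\,G(z,z)$ together with the translation invariance $G(z,z)=G(e,e)$. Second, the finite first moment hypothesis controls the expected support size: each generator of $\extsym{H}$ alters the support of the permutation coordinate by a bounded amount, so $|\supp{g}|$ is dominated by a constant multiple of the word length of $(g,t)$, whence $\sum_{y\in H}\phi(y)=\E\bigl[|\supp{g}|\bigr]<\infty$. Combining the two bounds gives
\[
\sum_{n\ge 1}\P\!\left(S_{n-1}^{-1}h\in\supp{g_n}\right)\le G(e,e)\,\E\bigl[|\supp{g}|\bigr]<\infty,
\]
so by the Borel--Cantelli lemma almost surely only finitely many steps change the value at $h$. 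Therefore $F_n(h)$ is eventually constant, and the limit $F_\infty(h)$ exists. The step I expect to be the main obstacle is the second ingredient, namely making precise the comparison between $\E[|\supp{g}|]$ and the first moment of $\mu$; this relies on the explicit description of word lengths in $\extsym{H}$ from Section \ref{section: extensions of finitary symmetric groups}, and it is the only place where the finite first moment assumption (rather than the mere finiteness of $\E[|\supp{g}|]$) is genuinely used.
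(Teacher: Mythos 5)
Your proposal is correct and follows essentially the same route as the paper: the paper proves the statement via a more general stabilization lemma whose proof is exactly your Borel--Cantelli argument (a change of $F_n(h)$ forces $S_{n-1}^{-1}h\in\supp{g_n}$, then independence plus the uniform Green-function bound from transience gives a sum bounded by a constant times $\E\bigl[|\supp{g}|\bigr]$). The final step you flagged as the potential obstacle is handled in the paper just as you suggest, via the inequality $|\supp{g}|\le 2\|(g,t)\|_{\std}$, since each transposition generator moves exactly two points.
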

We prove this result in Section \ref{section: random walks on extsymH} in a more general form, where $H$ is not assumed to be finitely generated (Lemma \ref{lem. stabilization key lemma (Borel-Cantelli)}). If we furthermore suppose that $\mu$ is non-degenerate (i.e., that $\supp{\mu}$ generates $\extsym{H}$ as a semigroup), then the stabilization lemma shows that the Poisson boundary of $(\extsym{H},\mu)$ is non-trivial (Corollary \ref{cor: non degenerate + transient + finite expected support implies nontrivial poisson boundary}). In particular, the Poisson boundary of any simple random walk on $\extsym{H}$, for $H$ infinite and not virtually $\Z$ nor virtually $\Z^2$, is non-trivial. In contrast, simple random walks on $\extsym{\Z^d}$ for $d=1,2$ have a trivial Poisson boundary (see Section \ref{section: recurrent projection}). The well-known open ``stability problem'' asks whether the non-triviality of the Poisson boundary for a simple random walk on a finitely generated group depends on the choice of generating set. Corollary \ref{cor: non degenerate + transient + finite expected support implies nontrivial poisson boundary} together with Propositions \ref{prop: recurrent proj to Z has trivial Poisson boundary} and \ref{prop: recurrent proj to Z2 has trivial Poisson boundary} imply that for the family of groups $\extsym{\Z^d}$, $d\ge 1$, there is no such dependence.

It is a result of Rosenblatt \cite[Theorem 1.10]{Rosenblatt1981} and Kaimanovich and Vershik \cite[Theorem 4.4]{KaimanovcihVershik1983} that every amenable group admits a probability measure with a trivial Poisson boundary. Hence, if $H$ is infinite, amenable, and is not virtually $\Z$ nor virtually $\Z^2$, the group $\extsym{H}$ admits symmetric non-degenerate random walks with a transient projection to $H$, for which the permutation coordinate does not stabilize (Remark \ref{rem: not stabilization of FSym Z3 for nondegenerate rw}). In Proposition \ref{prop: existence of measure with trivial boundary but transient projection}, we prove that $\extsym{\Z}$ also admits random walks with this property. The stabilization lemma excludes measures where the permutation coordinate does not stabilize via the assumption of a finite first moment. In Proposition \ref{prop: a first moment condition is necessary for stabilization} we show that this condition cannot be weakened to the finiteness of a smaller moment: we construct a probability measure on $\extsym{\Z}$ that induces a transient random walk on $\Z$ and that has a finite $(1-\varepsilon)$-moment, for every $0<\varepsilon<1$, for which the permutation coordinate does not stabilize. 

We now state our main theorem.

\begin{thm}\label{thm. main theorem} Let $\mu$ be a probability measure on $\extsym{\Z}$ with a finite first moment that induces a transient random walk on $\Z$. Then the Poisson boundary of $(\extsym{\Z},\mu)$ coincides with the space of limit functions $F_{\infty}:\Z\to \Z$, endowed with the corresponding hitting measure.
\end{thm}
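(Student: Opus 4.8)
The plan is to establish that the $\sigma$-algebra generated by the limit function $F_\infty$ is the full invariant $\sigma$-algebra of the random walk, which by the standard identification of the Poisson boundary as the space of ergodic components suffices. Since Lemma~\ref{lem: main stabilization lemma} already provides the almost sure pointwise convergence $F_n(h)\to F_\infty(h)$ for every $h\in\Z$, the boundary map $(F_n,S_n)\mapsto F_\infty$ is well-defined and $\extsym{\Z}$-equivariant, so the space of limit functions endowed with the hitting measure is a $\mu$-boundary. The content of the theorem is the reverse inclusion: this boundary is \emph{maximal}, i.e.\ it captures all the asymptotic information of the walk.

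The natural strategy is the \emph{ray/strip criterion} of Kaimanovich. First I would verify the hypotheses that make the entropy method applicable: $\mu$ has a finite first moment by assumption, and I would check that it has finite entropy $H(\mu)<\infty$ (or reduce to that case). The cleanest route is the ray criterion (Kaimanovich's conditional entropy criterion): one exhibits a measurable, $\mu^\infty$-almost surely defined \emph{gauge} or a sequence of ``strips'' $\mathfrak{S}(F_\infty^+,F_\infty^-)\subseteq\extsym{\Z}$ attached to a pair of boundary points of the forward and backward walks, such that the number of group elements in the strip that lie within distance $n$ of the origin grows subexponentially, and such that the strips asymptotically trap the trajectory. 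Here the key geometric input is that once the $\Z$-coordinate $S_n$ escapes to $\pm\infty$ (transience), the support of $F_n$ stabilizes on every finite window, so $F_\infty$ records exactly where each lamp-position is ultimately sent; the remaining degrees of freedom in an element $(f,s)$ compatible with a given $F_\infty$ and a given asymptotic direction should be of polynomial (hence subexponential) size inside a ball of radius $n$.

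Concretely, I would use the transience on $\Z$ to split the walk according to the sign of the escape direction, conditioning on the tail of $\{S_n\}$; on each event the backward trajectory determines a second limiting function, and the strip would consist of those $(f,s)\in\extsym{\Z}$ whose permutation $f$ agrees with the forward limit $F_\infty$ on the half-line already ``passed'' by the walk and with the backward limit on the complementary half-line, with $s$ ranging over a controlled set. The crucial estimate is that $|\mathfrak{S}\cap B_n|$ grows polynomially in $n$: this is where the finitary nature of $\Sym{\Z}$ and the one-dimensional geometry are used, since the number of finitely supported permutations compatible with prescribed boundary data inside a window of size $O(n)$ is bounded by a function growing much slower than $e^{n}$. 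Combined with the finite first moment (which via the subadditive ergodic theorem controls $|S_n|=O(n)$ and the word length of $(F_n,S_n)$), the strip criterion then forces the entropy of the complementary $\sigma$-algebra to vanish.

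The main obstacle I anticipate is the construction and size control of the strips: matching the forward and backward limit functions along a moving boundary point requires showing that the ``overlap region'' where neither limit yet constrains $f$ stays bounded in an averaged sense, which in turn relies on a quantitative transience estimate (e.g.\ that the walk on $\Z$ spends only finitely much time near any fixed site, with summable expected local time). Establishing this quantitative control under only a \emph{first} moment assumption — rather than a stronger moment or a spectral gap — is the delicate point, and it is exactly where the hypothesis that the projection to $\Z$ is transient, together with Lemma~\ref{lem: main stabilization lemma}, must be leveraged to its fullest. Once the strip estimate is in place, Kaimanovich's criterion yields the maximality of the boundary and hence the claimed identification.
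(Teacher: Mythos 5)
Your overall frame --- realize the space of limit functions as a $\mu$-boundary via Lemma \ref{lem: main stabilization lemma}, then prove maximality by a Kaimanovich entropy-type criterion --- is the same as the paper's, which applies the conditional entropy criterion (Theorem \ref{thm: entropy criterion formulation}) directly. But your proposal is missing the one quantitative idea that makes the argument close. Your central claim, that ``the number of finitely supported permutations compatible with prescribed boundary data inside a window of size $O(n)$ is bounded by a function growing much slower than $e^{n}$,'' is false as stated: a window of size $cn$ on which nothing is prescribed carries $(cn)!=e^{\Theta(n\log n)}$ permutations, which is superexponential; this is exactly the difficulty the paper isolates before its proof. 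The paper's resolution is the displacement statistic $\mathrm{Disp}(\sigma)=\sum_{i}|\sigma(i)-i|$. Since transience together with a finite first moment forces nonzero drift (so the escape direction is deterministic and no conditioning on its sign is needed), the law of large numbers confines $S_n$ to $[(C_1-\varepsilon)n,(C_1+\varepsilon)n]$; then $F_n$ agrees with $F_\infty$ to the left of $(C_1-2\varepsilon)n$, equals the identity to the right of $(C_1+2\varepsilon)n$, and --- the key point --- only the last $\tilde\varepsilon n$ increments can touch the remaining window of size $4\varepsilon n$, so the strong law applied to the i.i.d.\ integrable variables $\mathrm{Disp}(\sigma_k)$ (Lemmas \ref{lem: word metric estimates} and \ref{lem: increments are not that big}) bounds the total displacement of $F_n$ on that window by $D'\varepsilon n$. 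A stars-and-bars count of permutations with displacement at most $D'\varepsilon n$ on a window of size $4\varepsilon n$ then gives at most $2^{4\varepsilon n+1}\binom{(4+D')\varepsilon n}{D'\varepsilon n}$ possibilities, whose exponential growth rate tends to $0$ with $\varepsilon$, and the criterion applies. The ``quantitative transience'' you invoke (summable expected local times) is what drives stabilization, but it says nothing about this count; without a displacement bound or some substitute, neither your ray version nor your strip version closes.

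The strip route has a second, independent obstruction that your ``overlap region'' remark does not address: nonemptiness. The strip criterion requires a nonempty strip for $\lambda_-\otimes\lambda_+$-a.e.\ \emph{independent} pair $(F_\infty^-,F_\infty^+)$, and in $\extsym{\Z}$ the naive gluing of the forward limit on a half-line with the backward limit on its complement need not lie in $\Sym{\Z}$: limit functions are injective but in general not surjective (Example \ref{rem: limiting config is not always surjective}), points are carried across any cut in both directions, and the two prescribed images can intersect, so no finitely supported bijection extends the glued data at that cut. One can check that the cardinality defect of such a gluing vanishes automatically, so only disjointness of images must be arranged; but arranging it for independent samples forces a gap between the two prescribed half-lines whose length is unbounded under a mere first-moment hypothesis, and counting the bijections from the gap onto the uncovered set returns you to the factorial problem above --- again solvable only via displacement control. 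This is a genuine difference between $\extsym{\Z}$ and the lamplighters $F\wr\Z$, where any gluing of two configurations is a configuration and the classical strip argument succeeds. The paper sidesteps all of this by working one-sidedly: its sets $Q_n$ are measurable with respect to $\sigma(F_\infty)$ alone, and the two-sided structure is never needed.
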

We prove this result by using Kaimanovich's Conditional Entropy Criterion \cite[Theorem 4.6]{Kaimanovich2000}. Another component of our proof is the \emph{displacement} associated with a permutation (Definition \ref{defn: displacement}). We explain the idea of the proof of Theorem \ref{thm. main theorem} at the beginning of Section \ref{section: proof of the main theorem}, and present the proof in Subsection \ref{subsection: the proof}. We mention that the conditional entropy criterion, together with the Ray criterion and Strip criterion that follow from it \cite{Kaimanovich2000}, has played a role in the identification of the Poisson boundary for many classes of groups, some of which we mention below.

The Poisson boundary has been described for several families of countable groups, many of which possess hyperbolic-like properties. One such family is that of non-abelian free groups, whose Poisson boundary was described in \cite{DynkinMaljutov1961} for measures supported on a free generating set and in \cite{Derrienic1975} for finitary measures. More generally, the Poisson boundary of a (non-elementary) hyperbolic group was shown to coincide with its Gromov boundary in \cite{Ancona1987} for finitary measures and in \cite[Theorems 7.4 and 7.7]{Kaimanovich2000} for more general $\mu$. Furthermore,  \cite{ChawlaForghaniFrischTiozzo2022} established that this description of the Poisson boundary holds for any measure of finite entropy on a hyperbolic group, and described the boundary for acylindrically hyperbolic groups, extending \cite[Theorem 1.5]{MaherTiozzo2021}. These papers cover the description of the Poisson boundary for various classes of groups that had already been studied, with extra conditions on the measure, such as groups with infinitely many ends \cite{Woess1989}, \cite[Theorem 8.4]{Kaimanovich2000}, mapping class groups \cite{KaimanovichMasur1996}, braid groups \cite{FarbMasur1998}, groups acting on $\mathbb{R}$-trees \cite{GauteroMatheus2012} and $\mathrm{Out}(F_n)$ \cite{Horbez2016}. Another family of groups we mention is that of discrete subgroups of semi-simple Lie groups, studied in \cite{Furstenberg1971,Ledrappier1985} and \cite[Theorems 10.3 and 10.7]{Kaimanovich2000}. 

The Poisson boundary has also been described for classes of groups that do not exhibit a hyperbolic nature. A notable family is that of amenable groups, which always admit a non-degenerate measure with a trivial Poisson boundary \cite{Rosenblatt1981,KaimanovcihVershik1983}. A natural question is whether \emph{every} non-degenerate measure on a given amenable group $G$ has a trivial Poisson boundary. In such a case, $G$ is called a \emph{Choquet-Deny group} and this family of groups includes abelian groups \cite{Blackwell1955, ChoquetDeny1960,DoobSnellWilliamson1960}, nilpotent groups \cite{DynkinMaljutov1961,Margulis1966}, and groups that have no ICC \footnote{A countable group is said to be \emph{ICC} if it is non-trivial and every non-trivial element has an infinite conjugacy class.} quotient \cite{Jaworski2004,LinZaidenberg1998}. It is proven in \cite{FrischHartmanTamuzVahidi2019} that the latter property is also necessary and thus provides an algebraic characterization of countable Choquet-Deny groups. This result is further developed in \cite{ErschlerKaimanovich2023}, where the authors prove the following: any countable group $G$ with an ICC quotient admits a non-degenerate symmetric measure of finite entropy, for which the Poisson boundary can be completely described in terms of the convergence of sample paths to the boundary of a locally finite forest, whose vertex set is $G$.

Kaimanovich and Vershik \cite[Proposition 6.4]{KaimanovcihVershik1983} provided the first examples of amenable groups that admit measures with a non-trivial Poisson boundary. Namely, they proved that for the wreath product $\Z/2\Z\wr \Z^d$, $d\ge 1$, and for any non-degenerate finitely supported measure $\mu$ whose projection to $\Z^d$ induces a transient random walk, the lamp configurations stabilize almost surely. This implies the non-triviality of the associated Poisson boundary, and it was conjectured that this space is completely described by the space of limit configurations. This was initially proven for measures with a projection to $\Z^d$ with non-zero drift \cite[Theorem 3.6.6]{Kaimanovich2001}, whereas the case of zero drift was proved first for $d\ge 5$ \cite{Erschler2011}, and later for $d\ge 3$ by Lyons and Peres \cite{LyonsPeres2021}. Their proofs can be adapted to provide an analogous description of the Poisson boundary of random walks on free metabelian groups \cite{Erschler2011,LyonsPeres2021} and, in both cases, generalize to infinitely supported measures with appropriate moment conditions. Additional results about the description of the Poisson boundary on wreath products have been obtained in \cite{KarlssonWoess2007,Sava2010} and we recall them in Subsection \ref{subsection: Poisson boundary of wreath products}. Another class of amenable groups for which the Poisson boundary has been completely described (for finitely supported measures) is the family of discrete affine groups of a regular tree, studied in \cite{BrieusselTanakaZheng}.

Theorem \ref{thm. main theorem} provides a new class of groups for which we have a complete description of the Poisson boundary. Since our result does not ask for non-degeneracy of $\mu$, and the group $\extsym{\Z}$ contains subgroups isomorphic to $F\wr \Z$ for every finite group $F$ (Proposition \ref{prop: extsymH contains wreath products whenever H is not co-Hopfian}), Theorem \ref{thm. main theorem} includes the description of the Poisson boundary for the wreath product of a finite group with an infinite cyclic base group.

In Subsection \ref{subsection: cyclic extensions of locally finite groups} we comment on the similarities and differences between the group $\extsym{\Z}$ and wreath products of the form $F\wr \Z$, where $F$ is finite or $F=\Z$. One distinction we remark is that $\extsym{\Z}$ admits (degenerate) measures with non-trivial Poisson boundary that induce a recurrent random walk on $\Z$, whereas this is not possible for wreath products $F\wr \Z$ as above. Nonetheless, in Section \ref{section: recurrent projection} we show that such examples cannot be found among finitely supported measures: we prove that any finitary measure on $\extsym{\Z}$ or on $\extsym{\Z^2}$ which induces a recurrent random walk on the base group has a trivial Poisson boundary (Propositions \ref{prop: recurrent proj to Z has trivial Poisson boundary} and \ref{prop: recurrent proj to Z2 has trivial Poisson boundary}).

There is also a difference between $\extsym{\Z}$ and wreath products in terms of the relation between the stabilization of configurations and the non-triviality of the Poisson boundary. In the case of $F\wr \Z$, for $F$ finite or $F=\Z$, and supposing that lamp configurations stabilize, the Poisson boundary will be non-trivial as soon as $\supp{\mu}$ contains two distinct elements with the same projection to $\Z$. Indeed, in such a case one can prove that there will be distinct limit configurations that occur with positive probability, which in turn provide non-trivial shift-invariant events in the space of infinite trajectories. In contrast, the analogous statement for $\extsym{\Z}$ does not hold. In Example \ref{example: self correcting configuration} we exhibit a family of virtually cyclic subgroups of $\extsym{\Z}$, generated by elements with the same projection to $\Z$, which has the following property: for every finitary measure with transient projection to $\Z$, the permutation coordinate of every trajectory stabilizes to the same limit function $F:\Z\to \Z$. Note that, since such subgroups are virtually cyclic, the Poisson boundary is trivial for any probability measure.
\subsection{Organization}
In Section \ref{section: extensions of finitary symmetric groups} we define the groups $\extsym{H}$, show that they contain wreath products as subgroups whenever $H$ is co-Hopfian, and comment on the similarities and differences between both families of groups. Afterward, in Section \ref{section: random walks} we recall preliminary facts about random walks and Poisson boundaries. In Section \ref{section: random walks on extsymH} we prove the Stabilization Lemma in a more general form (Lemma \ref{lem. stabilization key lemma (Borel-Cantelli)}). We prove Theorem \ref{thm. main theorem} in Section \ref{section: proof of the main theorem}. Finally, in Section \ref{section: recurrent projection} we show that finitary measures on $\extsym{H}$, for $H=\Z$ or $H=\Z^2$, whose projection to $H$ induces a recurrent random walk, have a trivial Poisson boundary.
\subsection{Acknowledgments} I would like to thank my advisor Anna Erschler for her guidance and help with this project, as well as Joshua Frisch for many stimulating discussions, and Bogdan Stankov for helpful stylistic revisions. I would like to thank the anonymous referee for their careful reading of the first version of this paper, and for their comments that improved the exposition. This project has received funding from the European Union’s Horizon 2020 research and innovation programme under the Marie Sk\l{}odowska-Curie grant agreement N\textsuperscript{\underline{o}} 945322, and from the European Research Council (ERC) under the European Union’s Horizon 2020 research and innovation program (grant agreement N\textsuperscript{\underline{o}} 725773).
\section{Extensions of finitary symmetric groups}\label{section: extensions of finitary symmetric groups}

Let $H$ be a countable group, and consider the group $\Sym{H}$ of bijective functions $f:H\to H$ such that only finitely many values $h\in H$ satisfy $f(h)\neq h$. We call the set of such elements the \emph{support} of $f$ and denote it by $\supp{f}$. Note that the group operation of $\Sym{H}$ is the composition of functions, in contrast to the direct sum $\bigoplus_{H}H$, where the group operation is pointwise multiplication. We will denote the identity element of $\Sym{H}$ by $\id$.

Since $H$ has its own group structure, there is a natural left action of $H$ on functions $f:H\to H$. More precisely, for every $h\in H$ and $f:H\to H$ we define $h\cdot f$ by
\begin{equation*}\label{eq: action of H on FSym(H)}
	(h\cdot f)(x)=hf(h^{-1}x), \text{ for }x\in H.
\end{equation*}

Whenever $f$ has finite support, so does $h\cdot f$ and one has $\supp{h\cdot f}=h\cdot \supp{f}$. Similarly, if $f$ is a bijection, then so is $h\cdot f$. With this, $H$ has a well-defined action on $\Sym{H}$, and we can consider the semi-direct product $\extsym{H}\coloneqq \Sym{H}\rtimes H$.

 We now mention some of the geometric properties of these groups that have been studied in the past decades, in addition to the ones related to random walks that were already discussed in the introduction. The lampshuffler group $\extsym{\Z}$ is considered in \cite[Section 2.3]{VershikGordon1997} as an example of a finitely generated group that is locally embeddable in the class of finite groups (LEF) that is not residually finite (in the same paper it is mentioned that this example goes back to Vershik's Doctor of Sciences thesis \cite{Vershik1973}). Additionally, the group $\extsym{\Z}$ was used in \cite{Stepin1983} as an example of a finitely generated non-residually finite group which admits a freely approximable action. This is in contrast with \cite[Theorem 1]{Stepin1983}, which states that any finitely presented group that admits such an action must be residually finite. The subgroup $\mathrm{FAlt}(\Z)\rtimes \Z$ of the lampshuffler group $\extsym{\Z}$, where $\mathrm{FAlt}(\Z)$ stands for the group of finitary even permutations of $\Z$, is used in \cite[Example 2]{Chou1980} as an example of the existence of free subsemigroups in elementary amenable groups that are not virtually solvable. In \cite[Theorem 3]{ElekSzabo2006} it is shown that if $H$ is an infinite, hyperbolic, residually finite group with Kazhdan's property (T), then the group $\extsym{H}$ is sofic but not residually amenable. The group $\extsym{\Z}$ is used in \cite[Proposition 4.4]{BrieusselZheng2019} to provide an example of a locally-finite-by-$\Z$ group that does not possess Shalom's property $H_{\mathrm{FD}}$.
\subsection{Basic properties} \label{subsection: generating set}
Note that the group $\Sym{H}$ is locally finite, meaning that every finitely generated subgroup is finite, and hence it is elementary amenable. Since (elementary) amenability is preserved by group extensions, the group $\extsym{H}$ is (elementary) amenable whenever $H$ is.

We now show that whenever $H$ is finitely generated then so is the group $\extsym{H}$, and exhibit an explicit standard choice of generators.

Define for any $x,y\in H$ the transposition $\delta^y_x\in \Sym{H}$  by \[\delta^y_x(h)=\begin{cases}
	y, &\text{ if }h=x, \\ x, &\text{ if }h=y, \text{ and}\\
	h &\text{ otherwise.}
\end{cases}\]
That is, $\delta_x^y$ corresponds to the bijection of $H$ that swaps $x$ and $y$, while leaving the rest of $H$ unchanged.
We denote $\delta_x\coloneqq \delta^{e_H}_x$, for $x\in H$. 

Suppose now that $H$ is finitely generated, and fix a finite generating set $S_H$ of $H$. Then a standard generating set for $\extsym{H}$ is given by $\std = S_H\cup \{\delta_s\mid s\in S_H \}.$ Indeed, it suffices to note that conjugating $\delta_s$ by generators of $S_H$ allows one to obtain transpositions between adjacent elements of any arbitrarily large ball of $H$, and thus any permutation supported on it.

Note that for any $(f,x)\in \extsym{H}$, we have $(f,x)\cdot (\id,h)=(f,xh),$ for $h\in H,$ as well as  $(f,x)\cdot (\delta_s,e_H)=(f\circ (x\cdot \delta_s),x)=(f\circ \delta^x_{xs}, x),$ for $s\in S_H.$ Hence, multiplying by elements of $H$ corresponds to a translation of the second coordinate, while multiplying by $\delta_s$ corresponds to precomposing the first coordinate by a transposition in the current position $x$ in the direction of $s$.

We summarize the above discussion in the following lemma.

\begin{lem}\label{lem: basic properties of FSym(H)rtimes H}
	Let $H$ be a countable group, and consider the extension $\extsym{H}\coloneqq \Sym{H}\rtimes H$.
	\begin{enumerate}
		\item If $H$ is finitely generated by $S_H$, then $\extsym{H}$ is also finitely generated and the set $\std\coloneqq S_H\cup \{\delta_s\mid s\in S_H\}$ is a finite generating set.
		
		\item If $H$ is amenable, then so is $\extsym{H}$.
		
		\item If $H$ is elementary amenable, then so is $\extsym{H}$.
	\end{enumerate}
\end{lem}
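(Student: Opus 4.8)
The plan is to prove the three assertions separately: item (1) is a concrete generation argument, whereas items (2) and (3) reduce to standard closure properties of the classes of amenable and elementary amenable groups. The only genuinely non-formal step is the combinatorial core of (1), namely that the edge-transpositions of a connected graph generate its full finitary symmetric group; I expect this to be the main (though classical) obstacle.

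For (1), note first that $\std \subseteq \extsym{H}$, and that $S_H \subseteq \std$ already generates every element of the form $(\id, x)$ with $x \in H$; it therefore remains to show that $\langle \std \rangle$ contains all of $\Sym{H}$, identified with the subgroup $\{(f, e_H)\}$. The starting point is the conjugation identity
\[
(\id, x)\,(\delta_s, e_H)\,(\id, x)^{-1} = (\delta^x_{xs}, e_H), \qquad x \in H,\ s \in S_H,
\]
which follows at once from the two multiplication rules recorded just before the lemma together with $\supp{x \cdot f} = x \cdot \supp{f}$. Thus $\langle \std \rangle$ contains every transposition $\delta^x_{xs}$ swapping the endpoints of an edge of the Cayley graph $\cay{H}{S_H}$. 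Since $S_H$ generates $H$, this graph is connected, so given $a, b \in H$ one can realize $\delta^b_a$ as a product of edge-transpositions by induction on the graph distance $d(a,b)$: the base case $d(a,b)=1$ is an edge-transposition, and if $c$ lies adjacent to $b$ on a geodesic from $a$ to $b$, then $\delta^b_a = \delta^b_c\, \delta^c_a\, \delta^b_c$ expresses $\delta^b_a$ through an edge-transposition and a transposition realized at distance $d(a,b)-1$. As $\Sym{H}$ is generated by its transpositions, we obtain $\Sym{H} \subseteq \langle \std \rangle$, and combined with $\langle S_H \rangle = H$ this gives $\langle \std \rangle = \extsym{H}$.

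For (2) and (3), the key point is that $\Sym{H}$ is locally finite: any finite collection of its elements has supports contained in a common finite set $E \subseteq H$, hence generates a subgroup of the finite group $\mathrm{Sym}(E)$. A locally finite group is the directed union of its finite subgroups and is therefore elementary amenable, in particular amenable. The group $\extsym{H}$ fits into the short exact sequence
\[
1 \longrightarrow \Sym{H} \longrightarrow \extsym{H} \longrightarrow H \longrightarrow 1,
\]
and both amenability and elementary amenability are preserved under group extensions; hence if $H$ is amenable (respectively elementary amenable), then so is the extension $\extsym{H}$ of $\Sym{H}$ by $H$. Everything apart from the connected-graph fact invoked in (1) is a direct verification.
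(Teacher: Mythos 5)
Your proposal is correct and follows essentially the same route as the paper: the conjugation identity $(\id,x)(\delta_s,e_H)(\id,x)^{-1}=(\delta^x_{xs},e_H)$ producing all edge-transpositions of $\cay{H}{S_H}$ (your induction on graph distance just makes explicit the classical fact the paper invokes implicitly), and for (2)--(3) the local finiteness of $\Sym{H}$ combined with closure of (elementary) amenability under extensions. No gaps; your write-up is simply a more detailed version of the paper's argument.
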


The following quantity will appear in the proof of Theorem \ref{thm. main theorem}.
\begin{defn}\label{defn: displacement}
	Given a word metric $d_H$ on $H$, we define the \emph{displacement} of a permutation $\sigma\in \Sym{H}$ by $		\mathrm{Disp}(\sigma)\coloneqq \sum_{h\in H}d_H(h,\sigma(h)).$
\end{defn}
Note that the above is well defined: since $\sigma$ is finitely supported, the values $d_H(h,\sigma(h))$ vanish for all but finitely many elements $h\in H$. We have the following lower bound for the word length in $\extsym{H}$ with respect to the generating set $\std$.

\begin{lem}\label{lem: word metric estimates} For every $(\sigma,x)\in \extsym{H}$,
	$$\|(\sigma,x) \|_{\std}\ge \max\left\{\frac{1}{2}\mathrm{Disp}(\sigma),\|x\|_H\right\}.$$
\end{lem}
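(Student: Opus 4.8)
The plan is to prove the two lower bounds separately, since the maximum of two quantities is bounded below by a word length as soon as each quantity individually is. Both bounds should follow from the same strategy: examine how each generator in $\std$ affects the relevant quantity, and argue that a word of length $n$ cannot produce an element with displacement or base-projection exceeding what $n$ generators can accumulate.

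First I would establish the bound $\|(\sigma,x)\|_{\std}\ge \|x\|_H$. This is the cleaner of the two and follows from functoriality of the projection homomorphism. The map $\pi\colon\extsym{H}\to H$ sending $(\sigma,x)\mapsto x$ is a group homomorphism, and under the generating set $\std$ it sends each generator $s\in S_H$ to $s\in S_H$ and each $\delta_s$ to $e_H$. Hence $\pi$ maps $\std$ into $S_H\cup\{e_H\}$, so any word of length $n$ in $\std$ projects to a word of length at most $n$ in $S_H$. Applied to a geodesic word representing $(\sigma,x)$, this yields $\|x\|_H\le \|(\sigma,x)\|_{\std}$.

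Next I would treat the displacement bound $\|(\sigma,x)\|_{\std}\ge \tfrac{1}{2}\mathrm{Disp}(\sigma)$. The key observation, using the multiplication rules recorded before Lemma \ref{lem: basic properties of FSym(H)rtimes H}, is that right-multiplying by a generator changes $\sigma$ in a controlled way: multiplying by $s\in S_H$ leaves the permutation coordinate untouched (so $\mathrm{Disp}$ is unchanged), while multiplying by $\delta_s$ replaces $\sigma$ by $\sigma\circ\delta^x_{xs}$, i.e. postcomposes the current permutation with a single transposition of two elements at distance $d_H(x,xs)=\|s\|_H=1$ apart. The plan is to show that precomposing by one transposition of two points at $d_H$-distance one changes $\mathrm{Disp}$ by at most $2$. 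Writing out $\mathrm{Disp}(\sigma\circ\delta^a_b)-\mathrm{Disp}(\sigma)$, only the two terms indexed by the swapped points change, and a triangle-inequality estimate bounds the total change by $2\,d_H(a,b)=2$. Since $\mathrm{Disp}(\id)=0$, a telescoping argument along a geodesic word of length $n=\|(\sigma,x)\|_{\std}$ gives $\mathrm{Disp}(\sigma)\le 2n$, which rearranges to the claimed $\tfrac{1}{2}\mathrm{Disp}(\sigma)\le n$.

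The main obstacle I anticipate is the bookkeeping in the displacement step: one must be careful that the transposition being applied at the $k$-th step swaps elements at $d_H$-distance exactly one (this is where the factor $\tfrac12$ comes from), and that the triangle inequality is applied to the correct pair of summands after the swap. I would isolate this as a self-contained sublemma: \emph{for any $\sigma\in\Sym{H}$ and any transposition $\tau=\delta^a_b$, one has $\lvert\mathrm{Disp}(\sigma\circ\tau)-\mathrm{Disp}(\sigma)\rvert\le 2\,d_H(a,b)$.} Granting this, both bounds assemble immediately, and taking the maximum completes the proof.
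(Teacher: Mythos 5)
Your proposal is correct and follows essentially the same route as the paper: the projection bound via the homomorphism $\extsym{H}\to H$ killing the transposition generators, and the displacement bound via the observation that each multiplication by $\delta_s$ changes $\mathrm{Disp}(\cdot)$ by at most $2$ (your sublemma, with $d_H(a,b)=1$, is exactly the paper's one-line claim, which you justify by the same triangle-inequality computation the paper leaves implicit). The only nitpick is the inconsistent wording ``postcomposes''/``precomposing''---the group law gives $\sigma\mapsto\sigma\circ\delta^x_{xs}$, i.e.\ precomposition---but your sublemma is stated for $\sigma\circ\tau$, so the argument is unaffected.
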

\begin{proof}
	Since the only elements of $\std$ that have non-trivial projection to $H$ are those of $S_H$, it holds that $\|(\sigma,x) \|_{\std}\ge \|x\|_H$. Indeed, the above implies that any geodesic word of length $n$ for $(\sigma,x)$ in $\std$ projects to a word of length $n$ in $S_H$ that evaluates to $x$ in $H$.
	
	On the other hand, each multiplication by a transposition $\delta_s\in \std$ changes the value of $\mathrm{Disp}(\cdot)$ by at most $2$ units. This implies that
	$\mathrm{Disp}(\sigma)\le 2\|(\sigma,x)\|_{\std}$.
\end{proof}
\subsection{Wreath products subgroups in lampshufflers} \label{subsection: lamplighter subgroups}
Given groups $A$ and $B$, we recall that their wreath product is defined by $A\wr B\coloneqq \bigoplus_{B}A\rtimes B$ (see also Subsection \ref{subsection: Poisson boundary of wreath products}). As we mentioned in the introduction, it is natural to compare $\extsym{H}$ to wreath products of the form $F\wr H$, for $F$ a finite non-trivial group. The objective of this subsection is to show a condition that guarantees that $\extsym{H}$ contains a subgroup isomorphic to a wreath product $F\wr H$ with $F$ an arbitrary finite group. In particular, we will see that this holds for $H=\Z^d$, $d\ge 1$, as well as for any free group.

Recall that a group $H$ is called \emph{co-Hopfian} if every injective endomorphism of $H$ is an isomorphism. In other words, a group is co-Hopfian if and only if it is not isomorphic to a proper subgroup of itself.
\begin{prop}\label{prop: extsymH contains wreath products whenever H is not co-Hopfian} Let $H$ be an infinite group that is not co-Hopfian. Then for every finite group $F$, the group $\extsym{H}$ contains a subgroup isomorphic to $F\wr H$.
\end{prop}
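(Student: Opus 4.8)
The plan is to use the failure of co-Hopficity to manufacture a ``scaled'' copy of $H$ inside $\extsym{H}$ whose conjugation action on $\Sym{H}$ reproduces the regular shift action of $H$ on the lamp group of $F\wr H$. Since $H$ is not co-Hopfian, fix an injective endomorphism $\phi\colon H\to H$ whose image $H'\coloneqq\phi(H)$ is a proper subgroup. The first reduction is to arrange that $H'$ has enough room to host $F$, that is, $[H:H']\ge |F|$. If $[H:H']$ is infinite this already holds; if it is finite, say $[H:H']=m\ge 2$, then I would replace $\phi$ by an iterate $\phi^k$. Since $\phi$ is an isomorphism onto its image, applying $\phi^j$ to the inclusion $\phi(H)\le H$ gives $[\phi^j(H):\phi^{j+1}(H)]=m$ for every $j$, so the tower telescopes to $[H:\phi^k(H)]=m^k$, and choosing $k$ with $m^k\ge |F|$ lets me assume from the start that $[H:H']\ge |F|$.

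Next I would build the two ingredients of the wreath product. For the lamp group, choose elements $a_1,\dots,a_{|F|}\in H$ lying in pairwise distinct right cosets of $H'$ (possible because $[H:H']\ge|F|$), and set $A\coloneqq\{a_1,\dots,a_{|F|}\}$. By Cayley's theorem $F$ embeds into the group of permutations of the finite set $A$, which is naturally the subgroup of $\Sym{H}$ consisting of permutations supported on $A$; call the resulting copy $\bar F\le\Sym{H}$. For the top group I would take $T\coloneqq\{(\id,h')\mid h'\in H'\}\le\extsym{H}$, which is isomorphic to $H$ via $\phi$ (resp.\ $\phi^k$). The key computation, using the conjugation rule $(\id,h')(f,e_H)(\id,h')^{-1}=(h'\cdot f,e_H)$ together with the support identity $\supp{h'\cdot f}=h'\cdot\supp{f}$ recalled in this section, shows that conjugating $\bar F$ by $(\id,h')$ produces a copy $F^{(h')}$ of $F$ supported exactly on the translate $h'A$.

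Finally I would verify disjointness and assemble the isomorphism. If $h_1'A\cap h_2'A\neq\emptyset$ for $h_1',h_2'\in H'$, then $h_1'a=h_2'a'$ for some $a,a'\in A$, whence $a(a')^{-1}\in H'$, i.e.\ $a$ and $a'$ lie in the same right coset of $H'$ and so $a=a'$ and $h_1'=h_2'$. Thus the supports $\{h'A\}_{h'\in H'}$ are pairwise disjoint, the subgroups $F^{(h')}$ pairwise commute, and the subgroup they generate is the internal direct sum $\bigoplus_{h'\in H'}F^{(h')}$, on which $T$ acts by the regular shift $h'\mapsto$ (translation by $h'$). Hence $\langle\bar F,T\rangle\cong F\wr H'\cong F\wr H$, and the induced homomorphism $F\wr H\to\extsym{H}$ is injective because its $H$-projection is $\phi$ (resp.\ $\phi^k$), which is injective, while the lamp copies are faithfully and disjointly placed. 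The examples follow: $\Z^d$ is not co-Hopfian via $v\mapsto 2v$, and every free group is not co-Hopfian. I expect the main obstacle to be precisely the room condition $[H:H']\ge|F|$ handled in the first step, since a bare proper subgroup could have small index; the iteration of $\phi$ is what overcomes it, and the remaining bookkeeping is checking that conjugation genuinely realizes the regular shift so that the internal subgroup is the wreath product rather than merely an abstract extension.
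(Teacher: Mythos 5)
Your proof is correct and takes essentially the same route as the paper's: iterate the injective endomorphism to get a copy of $H$ of index at least $|F|$, place a finite symmetric group on coset representatives, and use conjugation by the embedded copy of $H$ to shift these disjoint supports, yielding the wreath product. The only cosmetic difference is that the paper takes the finitary symmetric group on a full section of $H/K$ and embeds $F$ into it at the end, whereas you embed $F$ via Cayley into the permutations of $|F|$ chosen representatives at the start; your write-up simply makes explicit the disjointness and shift-action verifications that the paper leaves implicit.
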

\begin{proof}
Suppose that $K\leqslant H$ is a proper subgroup with $K\cong H$ and fix $n\ge 1$. Then we can choose $K$ such that its index $[H:K]$ is some value $m\in \{n,n+1,\ldots\}\cup \{\infty\}$. Indeed, if $\varphi:H\to H$ is an injective homomorphism then $[H:\varphi^n(H)]=[H:\varphi(H)]^n$. The subgroup generated by $K$ together with the (finite) symmetric group on a section of $H/K$ generates $\mathrm{FSym}(m)\wr K\cong \mathrm{FSym}(m)\wr H$, where $\mathrm{FSym}(m)$ is the group of finitary permutations on a set of cardinality $m$. Since any finite group embeds into $\mathrm{FSym}(m)$ for some $m\ge 2$, this shows the proposition.
\end{proof}

Examples of groups that are not co-Hopfian are free abelian groups, free groups, solvable Baumslag-Solitar groups \cite[Section III.22]{delaHarpe2000} and Thompson's group $F$ \cite{Wassink2011}. In contrast, the family of co-Hopfian groups includes lattices in semi-simple Lie groups \cite{Prasad1976}, one-ended torsion-free hyperbolic groups \cite{Sela1997}, the group $\mathrm{Out}(F_n)$, $n\ge 3$ \cite{FarbHandel2007,HorbezWade2020}, and the mapping class group of a closed hyperbolic surface \cite{IvanovMcCarthy1999}.

In the case of $H=\Z^d$, $d\ge 1$, we can show that the embedding of a wreath product into $\extsym{\Z^d}$ does not distort its word metric. Recall that given two metric spaces $(X,d_X)$ and $(Y,d_Y)$, the space $(X,d_X)$ \emph{embeds quasi-isometrically} into $(Y,d_Y)$ if there exists a function $f:X\to Y$ and constants $C\ge 0$, $K\ge 1$ such that for every $x_1,x_2\in X$,
\begin{equation*}
	\frac{1}{K}d_X(x_1,x_2)-C\le d_Y(f(x_1),f(x_2))\le Kd_X(x_1,x_2)+C.
\end{equation*}

\begin{prop} For any $d\ge 1$ and any (non-trivial) finite group $F$, the wreath product $F\wr \Z^d$ embeds quasi-isometrically into $\extsym{\Z^d}$.
\end{prop}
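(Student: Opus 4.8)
The plan is to upgrade the purely algebraic embedding $\iota\colon F\wr\Z^d\hookrightarrow\extsym{\Z^d}$ furnished by Proposition \ref{prop: extsymH contains wreath products whenever H is not co-Hopfian} to a quasi-isometric one by comparing word lengths. Since $\iota$ is an injective homomorphism and word metrics are left-invariant, we have $d(\iota g_1,\iota g_2)=\|\iota(g_1^{-1}g_2)\|_{\std}$ and $d(g_1,g_2)=\|g_1^{-1}g_2\|_{F\wr\Z^d}$, so it suffices to prove $\|\iota(g)\|_{\std}\asymp\|g\|_{F\wr\Z^d}$ for every $g$, with multiplicative constants depending only on $d$ and $|F|$. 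First I would make $\iota$ explicit: fixing a finite-index subgroup $K\cong\Z^d$ with $[\Z^d:K]=m\ge|F|$, an isomorphism $\psi\colon\Z^d\to K$, and a system $R=\{r_1,\dots,r_m\}$ of coset representatives contained in a ball of radius $\rho$, an element $g=(\phi,v)$ is sent to $(\sigma,\tilde v)$, where $\tilde v=\psi(v)$ and $\sigma$ permutes the $m$ points of each translate $k+R$ ($k\in K$) by a permutation $\pi_k$ read off from $\phi$, trivial on all blocks except those indexed by $A:=\psi(\supp{\phi})$. Thus $\supp{\sigma}\subseteq\bigcup_{k\in A}(k+R)$ lies within distance $\rho$ of $A$, has cardinality $\asymp|\supp{\phi}|$, and $\mathrm{Disp}(\sigma)\asymp|\supp{\phi}|$ since each block has bounded diameter. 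I will use the standard fact that for finite $F$ one has $\|(\phi,v)\|_{F\wr\Z^d}\asymp\TS{\supp{\phi}}{0}{v}$, the length of a shortest lattice path from $0$ to $v$ visiting every active lamp position.

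For the upper bound I would exhibit an explicit word: follow a near-optimal traveling-salesman path for $A\cup\{0,\tilde v\}$ from $0$ to $\tilde v$ using the $\Z^d$-generators of $\std$, and at each visited block $k+R$ realize $\pi_k$ with a bounded number of transpositions $\delta^{\,\cdot}_{\,\cdot}$, each swapping two points at distance $\le 2\rho$ and hence costing $O(1)$. As distinct blocks are disjoint, these local permutations compose to $\sigma$, giving total cost $\lesssim\TS{A}{0}{\tilde v}+|A|$. Since $\psi$ extends to a bi-Lipschitz linear map, image walks scale leg-by-leg and $\TS{A}{0}{\tilde v}\asymp\TS{\supp{\phi}}{0}{v}$, while $|A|\le\TS{\supp{\phi}}{0}{v}+1$; hence $\|\iota(g)\|_{\std}\lesssim\|g\|_{F\wr\Z^d}$.

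The main obstacle is the lower bound, where Lemma \ref{lem: word metric estimates} alone is insufficient: its estimate $\|(\sigma,\tilde v)\|_{\std}\ge\frac12\mathrm{Disp}(\sigma)$ only controls the \emph{number} of active blocks, not the travel needed to reach them, which is exactly what the traveling-salesman quantity measures. To capture the latter I would reason along a geodesic word for $(\sigma,\tilde v)$: read left to right, its $\Z^d$-letters trace a lattice walk of the cursor from $0$ to $\tilde v$, while each transposition-letter precomposes by a swap $\delta^{c}_{c+s}$ between the current cursor $c$ and a neighbor. Any point genuinely moved by $\sigma$ must appear in some such swap, forcing the cursor to pass within distance $1$ of it, so $\supp{\sigma}$ lies in the $1$-neighborhood of the cursor walk. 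A detour argument gives $\TS{\supp{\sigma}}{0}{\tilde v}\le(\text{walk length})+2|\supp{\sigma}|$; since the walk length is at most $\|(\sigma,\tilde v)\|_{\std}$ and $|\supp{\sigma}|\le\mathrm{Disp}(\sigma)\le 2\|(\sigma,\tilde v)\|_{\std}$ by Lemma \ref{lem: word metric estimates}, this yields $\|(\sigma,\tilde v)\|_{\std}\gtrsim\TS{\supp{\sigma}}{0}{\tilde v}$. Finally, $\supp{\sigma}$ contains a point within distance $\rho$ of each $k\in A$, so using that visiting $n$ distinct lattice points costs at least $n-1$ (hence $\TS{A}{0}{\tilde v}\ge|A|-1$ absorbs the $\rho|A|$ detour correction), a bounded-perturbation comparison gives $\TS{\supp{\sigma}}{0}{\tilde v}\gtrsim\TS{A}{0}{\tilde v}\asymp\TS{\supp{\phi}}{0}{v}\asymp\|g\|_{F\wr\Z^d}$. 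Combining the two inequalities establishes $\|\iota(g)\|_{\std}\asymp\|g\|_{F\wr\Z^d}$ and hence the quasi-isometric embedding.
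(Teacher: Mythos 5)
Your route is genuinely different from the paper's. The paper never invokes the traveling-salesman description of word length in $F\wr\Z^d$, nor analyzes geodesics in $\std$; instead it chooses an adapted finite generating set $S_{\mathrm{ext}}$ of $\extsym{\Z^d}$ (whole block permutations $\sigma\in\mathrm{Sym}(T)$ paired with generators of $\Z^d$ and of $H=(m\Z)^d$) which contains a generating set $S_{\mathrm{wreath}}$ of the wreath subgroup, so that the inequality $\|g\|_{S_{\mathrm{ext}}}\le\|g\|_{S_{\mathrm{wreath}}}$ is free, and then converts any $S_{\mathrm{ext}}$-word into an $S_{\mathrm{wreath}}$-word of comparable length by shadowing its trajectory with a path in the Cayley graph of $H$. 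Your upper bound, and most of your lower bound (the cursor-walk tracking, the fact that $\supp{\sigma}$ lies in the $1$-neighborhood of the walk, the detour bound $\TS{\supp{\sigma}}{0}{\tilde v}\le n+2|\supp{\sigma}|$, and $|\supp{\sigma}|\le\mathrm{Disp}(\sigma)\le 2\|(\sigma,\tilde v)\|_{\std}$ from Lemma \ref{lem: word metric estimates}), are sound, and this is a legitimate alternative scheme.

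However, the final absorption step of your lower bound fails as written. Adding a detour of length $2\rho$ for each of the $|A|$ active blocks gives $\TS{A}{0}{\tilde v}\le\TS{\supp{\sigma}}{0}{\tilde v}+2\rho|A|$, and you propose to absorb the correction term using $\TS{A}{0}{\tilde v}\ge|A|-1$. That only yields $(1-2\rho)\,\TS{A}{0}{\tilde v}\le\TS{\supp{\sigma}}{0}{\tilde v}+2\rho$, which carries no information unless $2\rho<1$; but $R$ consists of $m\ge 2$ lattice points, so $\rho\ge 1/2$ always. Even upgrading to the separation bound $\TS{A}{0}{\tilde v}\ge s(|A|-1)$, where $s$ is the minimal distance between distinct points of $K$, does not rescue it in dimension $d\ge 2$: for the natural choice $K=(M\Z)^d$, $R=\{0,\ldots,M-1\}^d$ one has $s=M$ while the $\ell^1$-radius of $R$ is about $dM/2$, so the absorption constant $2\rho/s$ is about $d>1$. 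The repair is immediate from facts already in your argument, and this is the missing observation: the blocks are disjoint and $\sigma$ restricts to a nontrivial permutation of each active block, so $2|A|\le|\supp{\sigma}|\le\mathrm{Disp}(\sigma)\le 2\|(\sigma,\tilde v)\|_{\std}$. Hence the detour correction satisfies $2\rho|A|\le 2\rho\,\|(\sigma,\tilde v)\|_{\std}$, and combined with $\TS{\supp{\sigma}}{0}{\tilde v}\le 5\|(\sigma,\tilde v)\|_{\std}$ this gives $\TS{A}{0}{\tilde v}\le(5+2\rho)\|(\sigma,\tilde v)\|_{\std}$, with $\rho$ depending only on $d$ and $|F|$, which is exactly the lower bound you need. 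With this one substitution your proof is correct; note that it still relies on the quoted (standard, but external) traveling-salesman formula for the word metric of $F\wr\Z^d$, a dependency the paper's generating-set trick avoids entirely.
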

\begin{proof}

Let $F$ be any non-trivial and finite group. Consider $m\ge 2$ such that $F$ is isomorphic to a subgroup of $\mathrm{Sym}(m)$ the symmetric group on $m$ elements. Since $F\wr \Z^d$ embeds quasi-isometrically into $\mathrm{Sym}(m)\wr \Z^d$, in order to prove the proposition it suffices to show that $\mathrm{Sym}(m)\wr \Z^d$ embeds quasi-isometrically into $\extsym{\Z^d}$. 

Throughout the proof we will consider $\Z^d$ and its finite index subgroup $H\coloneqq (m\Z)^d$. In what follows we will use additive notation when referring to the group operation of $\Z^d$ or $H$.

 Let us denote $T\coloneqq\{0,1,\ldots,m-1\}^d\subseteq \Z^d$, which corresponds to a set of representatives for $\Z^d/H$. Then $\Z^d$ is partitioned into the cosets $\mathbf{h}+T$, for $\mathbf{h}\in H$. This implies that we have the embedding of $\mathrm{Sym}(m)\wr \Z^d\cong \mathrm{Sym}(T)\wr H\leqslant\extsym{\Z^d}$. In order to show that this is a quasi-isometric embedding, let us introduce finite generating sets for these groups.

 Consider $S_{\Z^d}=\{\pm\hat{\mathbf{e}}_1,\pm\hat{\mathbf{e}}_2,\ldots, \pm\hat{\mathbf{e}}_d,\mathbf{0}\}$ the canonical generating set of $\Z^d$ together with the identity element $\mathbf{0}\in \Z^d$. Consider the generating set $S_{H}=\{\pm m \hat{\mathbf{e}}_1,\pm m\hat{\mathbf{e}}_2,\ldots, \pm m\hat{\mathbf{e}}_d,\mathbf{0}\}$ for $H$. With this, we define the finite generating sets
 
 	$$S_{\mathrm{wreath}}= \left\{(\sigma, s)\mid \mathrm{Sym}(T) \text{ and }s\in  S_H \right\}$$
 	 for $\mathrm{Sym}(T)\wr H$, and
 	  $$S_{\mathrm{ext}}= \left\{(\sigma, s)\mid \mathrm{Sym}(T) \text{ and }s\in  S_{\Z^d}\cup S_{H}\right\}$$ 
 	   
 	 for $\extsym{\Z^d}.$ Note that $S_{\mathrm{wreath}}\subseteq S_{\mathrm{ext}}$, and hence that $\|g\|_{S_{\mathrm{ext}}}\le \|g\|_{S_\mathrm{wreath}}$ for all $g\in\mathrm{Sym}(T)\wr H $.
 	  In order to finish the proof, it suffices to show the existence of a constant $C>0$ such that for every $g\in\mathrm{Sym}(T)\wr H $, we have \begin{equation}\label{eq: geodesics}
 	  	\|g\|_{S_{\mathrm{wreath}}}\le C \|g\|_{S_{\mathrm{ext}}}.
 	  \end{equation} 
 	 
 	 Let us consider an arbitrary element $g=(f,x)\in\mathrm{Sym}(T)\wr H $, and denote $n\coloneqq \|g\|_{S_{\mathrm{ext}}}$. 
 	   
 	 Consider a word $w_1w_2\cdots w_n$ with $w_i\in S_{\mathrm{ext}}$ for each $i=1,\ldots, n$ and $g=w_1w_2\cdots w_n$. For every $i=1,\ldots, n$, define $\mathbf{h}_i\in H$ to be the unique element such that the projection to $\Z^d$ of $w_1\cdots w_i$ belongs to $\mathbf{h}_i+T$. Also denote $\mathbf{h}_0\coloneqq\mathbf{0}\in \Z^d$. In particular, we have $\mathbf{h}_n=x$, which corresponds to the projection of $g$ to $\Z^d$. We remark additionally that for every $i=0,1,\ldots, n-1$, we have that $h_{i+1}=h_i + \sum_{j=1}^d \varepsilon_j m  \hat{\mathbf{e}}_j$ for some values $\varepsilon_j\in \{-1,0,1\}$, for $j=1,\ldots, d$.
 	 
 	 Note that for every $\mathbf{h} \in H$, the permutation $f$ restricts to a bijection $f:\mathbf{h}+T\to \mathbf{h}+T$, and that 
 	 $$\supp{f}\subseteq \bigcup_{i=0}^n(\mathbf{h}_i+B+T),$$
 	 where $B=\left\{ \sum_{i=1}^d \varepsilon_i m \hat{\mathbf{e}}_i\mid \varepsilon_1,\ldots,\varepsilon_d\in \{0,1\} \right\}$. This is since the function $f$ can only act non-trivially on copies of $T$ that correspond to $\mathbf{h}_i+T$, $i=0,1,\ldots, n$, or that are neighboring to one of these cosets.
 	 
 	 Then, since the generating set $S_{\mathrm{wreath}}$ allows to have any permutation in $\mathrm{Sym}(T)$ accompanying any generator $s\in S_H$, in order to show Equation \eqref{eq: geodesics} it suffices to prove the following statement: there is a constant $C>0$, that only depends on $d$, such that there is a path of length at most $Cn$ on the Cayley graph of $H$ with respect to $S_H$, that starts at $\mathbf{0}\in H$, finishes at $x\in H$ and which visits all elements in the set \[ \{\mathbf{h}_i+b\mid i=0,1,\ldots, n \text{ and }b\in B\}. \]
 	 
 	 For every $i=0,1,\ldots,n-1$ we consider a path $p_i$ on the Cayley graph of $H$ with respect to $S_H$ (whose vertices are identified with the lattice $(m\Z)^d\subseteq \Z^d$) that begins at $\mathbf{h}_i$, finishes at $\mathbf{h}_{i+1}$ and visits all elements in the set $\{\mathbf{h}_i+b\mid b\in B\}$. The length of such a path can be chosen to be less than or equal to $2d\cdot 2^d+d$. Indeed, the path $p_i$ can be formed as follows: consider the cycles that go from $\mathbf{h}_i$ to $\mathbf{h}_i+b$ and back to $\mathbf{h}_i$ for each  $b\in B$, which each has length at most $2d$, and there are $2^d$ elements in $B$. By concatenating these cycles together with a path from $\mathbf{h}_i$ to $\mathbf{h}_{i+1}$, which can be chosen to have length at most $d$, we obtain the path $p_i$ of length at most $2d\cdot 2^d+d$. Finally, we concatenate all paths $p_i$, $i=0,1,\ldots, n$, and obtain a path of length at most $(2d\cdot 2^d+d)n$ on the Cayley graph of $H$ with respect to $S_H$, that starts at $\mathbf{0}\in H$, finishes at $x\in H$ and which visits all elements in the set \[ \{\mathbf{h}_i+b\mid i=0,1,\ldots, n \text{ and }b\in B\}. \] 
 	 
 	 This path can then be used, as explained two paragraphs above, to construct a word of length at most $(2d\cdot 2^d+d)n$ using generators from $S_{\mathrm{wreath}}$ that evaluates as a group element to $g$. This finishes the proof, since we have proved that one can choose $C=2d\cdot 2^d+d$ so that Equation \eqref{eq: geodesics} holds.
\end{proof}

\subsection{Cyclic extensions of locally finite groups}\label{subsection: cyclic extensions of locally finite groups}

Our main theorem (Theorem \ref{thm. main theorem}) concerns the group $\extsym{\Z}$, which is an extension by $\Z$ of the locally finite group $\Sym{\Z}$. In this subsection, we discuss the geometric properties present in groups that exhibit this algebraic structure and motivate the study of random walks on them.

Recall that a group $\mathcal{L}$ is called \emph{locally finite} if every finitely generated subgroup is finite. A group $G$ is a \emph{cyclic extension of a locally finite group} or  \emph{locally-finite-by-$\Z$} if there is a short exact sequence
\begin{equation}\label{eq: short exact sequence def locally finite by Z}
	1\rightarrow \mathcal{L}\rightarrow G\rightarrow \Z\rightarrow 1,	
\end{equation}
where $\mathcal{L}$ is a locally finite group. Observe that any such sequence must necessarily split, so that $G=\mathcal{L}\rtimes \Z$. In particular, the group $\extsym{\Z}$ as well as wreath products of the form $F\wr \Z=\bigoplus_{\Z}F\rtimes \Z$, for a non-trivial and finite group $F$, are cyclic extensions of locally finite groups. Other groups in this category are the discrete affine groups of a regular tree. These groups were introduced in \cite{BrieusselTanakaZheng}, where their Poisson boundary was described for finitary measures. If every finitely generated subgroup of $\mathcal{L}$ is contained in a finite \emph{normal} subgroup of $\mathcal{L}$, then we say that $\mathcal{L}$ is \emph{locally normally finite}. Notably, $\bigoplus_{\Z}F$ is locally normally finite, while $\Sym{\Z}$ is not. This provides an algebraic distinction between the groups $F\wr \Z$ and $\extsym{\Z}$. Cyclic extensions of locally normally finite groups are studied in \cite{BrieusselZheng2019}.

Cyclic extensions of locally finite groups can have arbitrarily fast-growing F\o lner functions. This was first observed in the last remark of Section 3 in \cite{Erschler2003} (see also \cite[Section 8.2]{Gromov2008}), and a proof of this result is given in \cite[Corollary 1.5]{OlshanskiiOsin2013}. In \cite[Theorem 1.1]{BrieusselZheng2021}, it is proven that locally-finite-by-$\Z$ groups exhibit a large class of speed, return probability, entropy, isoperimetric profiles, and $L_p$-compression functions. In particular, the authors prove that any sufficiently regular function that grows at least exponentially can be realized as the F\o lner function of a locally-finite-by-$\Z$ group, which admits a simple random walk with a trivial Poisson boundary \cite[Corollary 4.7]{BrieusselZheng2021}. This behavior differs from what happens for the \textit{linear algebraic F\o lner function}, introduced in \cite[Section 1.9]{Gromov2008}. In general, this function is bounded above by the usual (combinatorial) F\o lner function, and both of them coincide for left-orderable groups \cite[Section 3.2]{Gromov2008}. In contrast, the linear algebraic F\o lner function of every locally-finite-by-$\Z$ group grows linearly \cite[Section 8.1]{Gromov2008}.

There are locally-finite-by-$\Z$ groups that admit simple random walks with a non-trivial Poisson boundary. Examples of this are $\Z/2\Z\wr \left( \Z/2\Z\wr \Z \right)$ and the discrete affine group of a regular tree \cite{BrieusselTanakaZheng}. On the other hand, we mentioned above that \cite[Corollary 4.7]{BrieusselZheng2021} provides a large family of locally-finite-by-$\Z$ groups that admit simple random walks with a trivial Poisson boundary. As a more concrete example, simple random walks on the groups $F\wr \Z$, for $F$ finite or $F=\Z$, and $\extsym{\Z}$ also have a trivial boundary. The case of wreath products is proven in \cite[Proposition 6.2]{KaimanovcihVershik1983}, while the case of $\extsym{\Z}$ follows from the sublinear asymptotics of the drift function \cite{Yadin2009}. Moreover, every probability measure on $F\wr \Z$ with a recurrent projection to the base group $\Z$ has a trivial Poisson boundary. This is proven in \cite[Proposition 6.3]{KaimanovcihVershik1983} and \cite{Kaimanovich1983} for finitary measures, in \cite{Kaimanovich1991} for finite first moment measures, and in \cite[Proposition 4.9]{LyonsPeres2021} for the general case, by using the classification of Choquet-Deny groups \cite{FrischHartmanTamuzVahidi2019}. In contrast, this generalization does not hold for $\extsym{\Z}$. Indeed, the group $\Sym{\Z}$ admits symmetric measures with finite entropy and with a non-trivial Poisson boundary \cite{Kaimanovich1983} (this also follows from \cite{FrischHartmanTamuzVahidi2019}, since every non-trivial element of $\Sym{\Z}$ has an infinite conjugacy class). In consequence, $\extsym{\Z}$ admits (degenerate) random walks with a non-trivial Poisson boundary and a recurrent projection to $\Z$.
\section{Background on random walks and Poisson boundaries}\label{section: random walks}
\subsection{Random walks on groups}
Let $G$ be a countable group and $\mu$ a probability measure on $G$. The \emph{(right) $\mu$-random walk} $(G,\mu)$ is the Markov chain with state space $G$, whose transition probabilities are given by 
\begin{equation*}
	p(g,h)\coloneqq \mu(g^{-1}h), \ g,h\in G.
\end{equation*}

We assume that the $\mu$-random walk starts at the identity $e_G\in G$. The space of infinite trajectories of the random walk $G^{\infty}$ is endowed with the probability $\P$, which is the push-forward of the Bernoulli measure $\mu^{\mathbb{N}}$ on the space of increments $G^{\infty}$ through the map
\begin{align*}
	G^{\infty}&\to G^{\infty} \\
	(g_1,g_2,g_3,\ldots)&\mapsto (g_1,g_1g_2,g_1g_2g_3,\ldots).
\end{align*}

Suppose that $G$ is finitely generated, and let $\ell_G$ be a word length on $G$. For $\alpha>0$, a probability measure $\mu$ on $G$ is said to have a \emph{finite $\alpha$-moment} if $\sum_{g\in G}\ell_G(g)^{\alpha}\mu(g)<\infty$. This property does not depend on the choice of $\ell_G$, since changing the word length on a group modifies the metric by a multiplicative constant.

\subsection{The Poisson boundary}\label{subsection: the Poisson boundary}
Let us now recall the definition of the Poisson boundary, together with some equivalent definitions. For background on random walks on countable groups and their Poisson boundaries, we refer to \cite{KaimanovcihVershik1983,Kaimanovich2000}, the surveys \cite{Erschler2010,Zheng2022}, and the introduction and Section 3 of \cite{ErschlerKaimanovich2023}.

Given two trajectories $\mathbf{x}=(x_1,x_2,\ldots)$ and $\mathbf{y}=(y_1,y_2,\ldots)$ in $G^{\infty}$, say that they are \emph{orbit equivalent} if there exist $p,N\ge 0$ such that $x_{p+n}=y_{n}$ for every $n\ge N$. Consider the measurable hull of this equivalence relation in $G^{\infty}$. That is, the $\sigma$-algebra of measurable subsets of $G^{\infty}$ which are unions of the equivalence classes, modulo $\P$-null sets. The associated quotient of the space of infinite trajectories by this measurable hull is called the \emph{Poisson boundary} $\partial_{\mu} G$ of the random walk $(G,\mu)$. Equivalently, the Poisson boundary is the space of ergodic components of the \emph{shift map} of the space of infinite trajectories, defined by
\begin{equation*}
	\begin{aligned}
		T:G^{\infty}&\to G^{\infty}\\
		(x_1,x_2,x_3,\ldots)&\mapsto (x_2,x_3,\ldots).
	\end{aligned}
\end{equation*}

If we do not allow the shift by $p$ in the definition of the orbit equivalence relation above, we obtain the \emph{tail equivalence} relation. The associated quotient space is called the \emph{tail boundary} of the random walk, and it provides an alternative and equivalent definition of the Poisson boundary of a random walk on a group \cite{Derrienic1980,KaimanovcihVershik1983}. We remark that the Poisson boundary can be defined for general Markov chains, and these two definitions are no longer equivalent in this broader context (see \cite[Example 2]{BlackwellFreedman1964} and \cite[Theorem 2.2]{Kaimanovich1992}).

Recall that a function $f:G\to \mathbb{R}$ is called \emph{$\mu$-harmonic} if for every $g\in G$, it holds that $f(g)=\sum_{h\in G}f(gh)\mu(h)$. The Poisson boundary of $(G,\mu)$ can be described in terms of the space of bounded $\mu$-harmonic functions on the subgroup generated by $\supp{\mu}$. In particular, the non-triviality of the Poisson boundary of a non-degenerate random walk is equivalent to the existence of non-constant bounded $\mu$-harmonic functions on $G$. 

Now we introduce the concept of a $\mu$-boundary of $G$. Let us denote by $\mathbf{bnd}:G^{\infty}\to \partial_{\mu} G$ the associated quotient map from the space of trajectories onto the Poisson boundary. The space $\partial_{\mu}G$ is endowed with the so-called \emph{harmonic measure} $\nu\coloneqq \mathbf{bnd}_{*}(\P)$, which satisfies the equation $\mu*\nu=\nu$. One says that $\nu$ is \emph{$\mu$-stationary}. Thus, the Poisson boundary of $(G,\mu)$ has the structure of a measure space $(\partial_{\mu} G,\mathcal{F},\nu)$ endowed with a measurable $G$-equivariant map $\mathbf{bnd}:G^{\infty}\to \partial_{\mu} G$ that satisfies

\begin{enumerate}
	\item $\mathcal{I}=\mathbf{bnd}^{-1}(\mathcal{F})$ modulo $\P$-null sets, where $\mathcal{I}$ is the sub-$\sigma$-algebra of shift-invariant events of the space of trajectories $G^{\infty}$, and
	
	\item the measure $\nu=\mathbf{bnd}_{*}(\P)$ is $\mu$-stationary, which by definition means that it satisfies the equation $\nu=\mu*\nu\coloneqq \sum_{g\in G}\mu(g)g\nu.$
\end{enumerate}

In general, a measure space $(B,\mathcal{A},\lambda)$ endowed with a measurable $G$-action is called a \emph{$\mu$-boundary} of $G$ if there exists a $G$-equivariant measurable map $\pi:G^{\infty}\to B$ for which $\lambda=\pi_{*}(\P)$ is $\mu$-stationary and such that $\pi^{-1}(\mathcal{A})\subseteq \mathcal{I}$ modulo $\P$-null sets. The Poisson boundary of $(G,\mu)$ is the \emph{maximal} $\mu$-boundary of $G$, in the sense that for every $\mu$-boundary $(B,\mathcal{A},\lambda)$, the projection $\pi:G^{\infty}\to B$ factors through the map $\mathbf{bnd}$, and it is unique up to a $G$-equivariant measurable isomorphism.
\subsection{The Poisson boundary of wreath products}\label{subsection: Poisson boundary of wreath products}
We illustrate the above notions with the case of wreath products, which also serves as a point of comparison with the results of this paper.

Recall that given groups $A$ and $B$, their wreath product $A\wr B$ as the semidirect product $\bigoplus_{B}A\rtimes B$, where $B$ acts by translations on the direct sum $\bigoplus_{B}A$. We remark that some authors use the notation $B\wr A$. Wreath products are also called ``lamplighter groups'', and for an element $(f,x)\in A\wr B$, the function $f$ is referred to as the ``lamp configuration''.

Kaimanovich and Vershik \cite{KaimanovcihVershik1983} proved that for $\Z/2\Z\wr \Z^d$, $d\ge 1$, and for any non-degenerate finitely supported measure $\mu$ whose projection to $\Z^d$ induces a transient random walk, the lamp configurations stabilize almost surely. In other words, with probability one, the values assigned by the lamp configuration to a given finite subset of $\Z^d$ change only finitely many times along the trajectory of the random walk. Hence, the space of limit lamp configurations has the structure of a $\mu$-boundary, and its non-triviality implies that the Poisson boundary is non-trivial as well. These were the first examples of measures with non-trivial boundary on amenable groups, and Kaimanovich and Vershik conjectured that the space of limit configurations coincides with the Poisson boundary. This was proven under the hypotheses of a finite first moment of $\mu$ and projection to $\Z^d$ with non-zero drift by Kaimanovich \cite[Theorem 3.6.6]{Kaimanovich2001}, whereas the case of zero drift, for $d\ge 3$, remained open. The Poisson boundary was later described for $A\wr B$ where $A$ is finite and $B$ is a free group \cite{KarlssonWoess2007}, or where $B$ has infinitely many ends, or is hyperbolic \cite{Sava2010}. The case of measures on $A\wr \Z^d$ with a centered projection to $\Z^d$ was proved for $d\ge 5$ by Erschler \cite{Erschler2011}, and by Lyons and Peres for $d\ge 3$ \cite{LyonsPeres2021}, both with additional moment conditions on $\mu$.
\subsection{The conditional entropy criterion}\label{subsection: conditional entropy}
Recall that the \emph{entropy} $H(\mu)$ of a probability measure $\mu$ on $G$ is defined as
$H(\mu)\coloneqq -\sum_{g\in G}\mu(g)\log(\mu(g))$. Avez \cite{Avez1972} introduced the \emph{asymptotic entropy} of the random walk $(G,\mu)$, defined as $h(\mu)\coloneqq \lim_{n\to \infty}H(\mu^{*n})/n$. The existence of this limit is guaranteed by the subadditivity of the sequence $H_n\coloneqq H(\mu^{*n})$, $n\ge 1$. Avez \cite{Avez1974} proved that if $h(\mu)=0$, then the random walk has a trivial Poisson boundary. Furthermore, the \emph{Entropy Criterion}, due to Derrienic \cite{Derrienic1980} and Kaimanovich and Vershik \cite{KaimanovcihVershik1983} states that if $H(\mu)<\infty$, then $h(\mu)=0$ if and only if the Poisson boundary of the $\mu$-random walk on $G$ is trivial. This criterion was strengthened by Kaimanovich to a \emph{Conditional Entropy Criterion}, which we state below. 

\begin{thm}[{\cite[Theorem 4.6]{Kaimanovich2000}}]\label{thm: entropy criterion formulation} Let $\mu$ be a probability measure on $G$ with finite entropy, and consider $\mathbf{B}=(B,\mathcal{A},\nu)$ a $\mu$-boundary of $G$. Suppose that for every $\varepsilon>0$ there exists a random sequence of finite subsets $\{Q_{n,\varepsilon}\}_{n\ge 1}$ of $G$ such that
	\begin{enumerate}
		\item \label{item: strip criterion 1} the random set $Q_{n,\varepsilon}$ is a measurable function with respect to $\mathcal{A}$, for every $n\ge 1$;
		\item \label{item: strip criterion 2} $\displaystyle\limsup_{n\to \infty}\frac{1}{n}\log|Q_{n,\varepsilon}|<\varepsilon$ almost surely; and
		\item \label{item: strip criterion 3} $\displaystyle\limsup_{n\to \infty} \P\left(x_n\in Q_{n,\varepsilon} \right)>0$, where $\{x_n\}_{n\ge 1}$ is the trajectory of the $\mu$-random walk.
	\end{enumerate}
	Then $\mathbf{B}$ coincides with the Poisson boundary of $(G,\mu)$.
\end{thm}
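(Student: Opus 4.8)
The plan is to deduce the maximality of $\mathbf{B}$ from a relativized form of the Entropy Criterion of Derrienic and Kaimanovich--Vershik recalled above, with the hypotheses $(1)$--$(3)$ serving precisely to force a conditional entropy rate to vanish. Write $\pi:G^{\infty}\to B$ for the boundary map, let $\mathcal{A}_{\infty}=\pi^{-1}(\mathcal{A})\subseteq\mathcal{I}$ be the associated shift-invariant $\sigma$-algebra on trajectories, and for each $n$ let $\pi_{n}^{b}$ be a regular conditional law of the position $x_{n}$ given $\{\pi(\mathbf{x})=b\}$; these exist because $G$ is countable and $B$ is standard. I set
\begin{equation*}
	H(x_{n}\mid\mathbf{B})=\int_{B}\Big(-\sum_{g\in G}\pi_{n}^{b}(g)\log\pi_{n}^{b}(g)\Big)\,d\nu(b),
\end{equation*}
the conditional Shannon entropy of $x_{n}$ given $\mathbf{B}$. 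Using the Markov property together with the shift-invariance of $\mathcal{A}_{\infty}$, the sequence $n\mapsto H(x_{n}\mid\mathbf{B})$ is subadditive, so the conditional entropy rate $h(\mu\mid\mathbf{B})=\lim_{n}\tfrac1n H(x_{n}\mid\mathbf{B})$ exists. The backbone of the argument is the fibrewise analogue of the stated Entropy Criterion: since $H(\mu)<\infty$, the $\mu$-boundary $\mathbf{B}$ coincides with the Poisson boundary $\partial_{\mu}G$ if and only if $h(\mu\mid\mathbf{B})=0$. This specializes to the quoted unconditional statement when $\mathbf{B}$ is trivial, and is proved by the same entropy-additivity bookkeeping carried out over the fibres of $\mathbf{B}$. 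It therefore suffices to prove $h(\mu\mid\mathbf{B})=0$.

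The analytic engine I would invoke is the conditional Shannon--McMillan--Breiman theorem for the walk: almost surely
\begin{equation*}
	-\frac1n\log\pi_{n}^{b}(x_{n})\xrightarrow[n\to\infty]{}h(\mu\mid\mathbf{B}),\qquad b=\pi(\mathbf{x}).
\end{equation*}
This is the relativization to $\mathcal{A}_{\infty}$ of the classical Shannon theorem for random walks, and is obtained by the same argument (a subadditive/martingale analysis of the one-step conditional kernels), with $H(\mu)<\infty$ supplying the integrability.

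With these two facts available, the hypotheses on $\{Q_{n,\varepsilon}\}$ enter through an elementary counting estimate. Fix $\varepsilon>0$ and $\delta>0$. By $(1)$ the set $Q_{n,\varepsilon}=Q_{n,\varepsilon}(b)$ is a function of the boundary point $b$, and by $(2)$ the sets $G_{n}=\{b:|Q_{n,\varepsilon}(b)|\le e^{\varepsilon n}\}$ satisfy $\nu(G_{n})\to1$. Since for each $b$ the numbers $\pi_{n}^{b}(g)$ sum to at most $1$, restricting the sum to the at most $e^{\varepsilon n}$ elements $g\in Q_{n,\varepsilon}(b)$ with $\pi_{n}^{b}(g)\le e^{-(\varepsilon+\delta)n}$ and integrating over $b$ gives
\begin{equation*}
	\P\Big(x_{n}\in Q_{n,\varepsilon}\ \text{and}\ \pi_{n}^{b}(x_{n})\le e^{-(\varepsilon+\delta)n}\Big)\le\nu(B\setminus G_{n})+e^{\varepsilon n}e^{-(\varepsilon+\delta)n}\xrightarrow[n\to\infty]{}0.
\end{equation*}
Suppose now, for contradiction, that $h(\mu\mid\mathbf{B})>\varepsilon+\delta$. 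Then by the conditional Shannon theorem one has $\pi_{n}^{b}(x_{n})<e^{-(\varepsilon+\delta)n}$ for all large $n$ almost surely, whence $\P(\pi_{n}^{b}(x_{n})\ge e^{-(\varepsilon+\delta)n})\to0$. Adding this to the displayed bound forces $\P(x_{n}\in Q_{n,\varepsilon})\to0$, contradicting $(3)$. Hence $h(\mu\mid\mathbf{B})\le\varepsilon+\delta$; letting $\delta\to0$ and then $\varepsilon\to0$ yields $h(\mu\mid\mathbf{B})=0$, and the relativized Entropy Criterion identifies $\mathbf{B}$ with the Poisson boundary.

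I expect the genuine obstacle to lie entirely in the first two paragraphs, that is, in the structural entropy theory rather than in the use of $(1)$--$(3)$: one must rigorously justify the existence and measurability of the regular conditional laws $\pi_{n}^{b}$, the subadditivity of $H(x_{n}\mid\mathbf{B})$, the fibrewise Kaimanovich--Vershik additivity that equates $h(\mu\mid\mathbf{B})=0$ with maximality of $\mathbf{B}$, and the conditional Shannon--McMillan--Breiman convergence. Once this machinery is in place, the deduction from the three hypotheses is the short counting-plus-dominated-convergence argument above, which is where the specific assumptions on $\{Q_{n,\varepsilon}\}$ do their work.
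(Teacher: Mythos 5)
The paper contains no proof of this statement: it is imported verbatim as \cite[Theorem 4.6]{Kaimanovich2000}, so the only comparison available is with Kaimanovich's original argument, and your proposal is in substance exactly that argument --- the relativized entropy criterion (maximality of $\mathbf{B}$ is equivalent to vanishing of the conditional entropy $h(\mu\mid\mathbf{B})$, valid under $H(\mu)<\infty$), the conditional Shannon--McMillan--Breiman convergence, and the counting estimate that converts hypotheses (1)--(3) into $h(\mu\mid\mathbf{B})\le\varepsilon+\delta$. Your counting step is correct as written (dominated convergence gives $\nu(G_n)\to 1$ from hypothesis (2), and the bound $\nu(B\setminus G_n)+e^{-\delta n}$ contradicts hypothesis (3)); the two structural inputs you defer are precisely the theorems Kaimanovich establishes before deriving this criterion, and for your contradiction, convergence in probability of $-\tfrac1n\log\pi_n^b(x_n)$ to $h(\mu\mid\mathbf{B})$ (together with a.s.\ constancy of that rate, which you should note follows from ergodicity of the shift) already suffices, which is slightly cheaper than the almost sure statement you invoke.
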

This result is a common tool used to prove the maximality of a $\mu$-boundary for a random walk on a group, and we will use it in the proof of Theorem \ref{thm. main theorem}. We mention that Lyons and Peres \cite[Corollary 2.3]{LyonsPeres2021} proved an alternative version of this criterion, where the third condition of Proposition \ref{thm: entropy criterion formulation} is replaced by
\begin{equation*}
	\limsup_{n\to \infty} \P\left(\text{ there exists }m\ge n \text{ such that }x_m\in Q_{n,\varepsilon} \right)>0.
\end{equation*}
This condition is easier to verify in some situations \cite{LyonsPeres2021,BrieusselTanakaZheng}. For Theorem \ref{thm. main theorem} we will apply the original version.

\section{Random walks on $\extsym{H}$}\label{section: random walks on extsymH}
\subsection{Stabilization of the permutation coordinate}
We study conditions on the measure $\mu$ that guarantee the stabilization of the permutation coordinate to a limit function $F_{\infty}:H\to H$. We first give a precise definition of stabilization.

\begin{defn}\label{def: stabilization of permutation coordinate} We say that the permutation coordinate $\{F_n\}_{n\ge 1}$ of the $\mu$-random walk $\left\{(F_n,S_n)\right \}_{n\ge 0}$ on $\extsym{H}$ \emph{stabilizes} if almost surely for every $h\in H$, there exists $N\ge 1$ such that $F_n(h)=F_N(h)$ for all $n\ge N$.
\end{defn}

Whenever the permutation coordinate stabilizes, we can associate with almost every trajectory $\left\{(F_n,S_n)\right \}_{n\ge 0}$ of the $\mu$-random walk a limit function $F_{\infty}:H\to H$, which satisfies for every $h\in H$, $F_n(h)=F_{\infty}(h)$ for large enough $n$. Since every function $F_n$ is a bijection from $H$ to itself, $F_{\infty}$ will be injective. However, it may happen that the limit function is not surjective.

\begin{exmp}\label{rem: limiting config is not always surjective}
	Consider $F(X)$ the free group on a finite set $X\neq \varnothing$, and let $\mu$ be any probability measure on $\extsym{F(X)}$ whose support is the set $\{(\delta_x,x)\mid x\in X\}$. The projection $\{S_n\}_n$ of the $\mu$-random walk to $F(X)$ is supported on the free semigroup generated by $X$, and hence it will converge to a geodesic ray $\gamma:[0,+\infty) \to F(X)$. Since the support of $\mu$ consists of elements of the form $(\delta_x,x)$, the permutation coordinate of the $\mu$-random walk will stabilize to a limit function $F_{\infty}$, that satisfies $F_{\infty}(\gamma(i))=\gamma(i+1)$, $i\ge 0$, and $F_{\infty}(h)=h$ for any $h$ outside of the image of $\gamma$. In particular, the identity element does not have a preimage through $F_{\infty}.$ Note also that the Poisson boundary of $(\extsym{F(X)},\mu)$ is non-trivial whenever $|X|\ge 2$.
\end{exmp}

\begin{prop}\label{prop: non-degenerate measure + stabilization imply non-trivial boundary} Let $H$ be an infinite countable group and $\mu$ a non-degenerate probability measure on $\extsym{H}$. Suppose that the permutation coordinate of the $\mu$-random walk stabilizes. Then the Poisson boundary of $(\extsym{H},\mu)$ is non-trivial.
\end{prop}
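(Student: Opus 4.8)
The plan is to realize the space of limit functions as a $\mu$-boundary of $\extsym{H}$ and to prove that this boundary is non-trivial; since the Poisson boundary is the maximal $\mu$-boundary, its non-triviality follows immediately. Concretely, I would define a map $\mathbf{bnd}$ from the space of trajectories into the space $\Inj{H}$ of injective functions $H\to H$ by sending almost every trajectory $\{(F_n,S_n)\}_{n\ge 0}$ to its pointwise limit $F_\infty$, which is well defined by the stabilization hypothesis (Definition \ref{def: stabilization of permutation coordinate}) and injective because each $F_n$ is a bijection. Viewing $\Inj{H}$ as a Borel subset of $H^H$, the map $\mathbf{bnd}$ is measurable as a pointwise limit of measurable maps. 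Letting $\extsym{H}$ act on functions $H\to H$ by $(g,y)\cdot F=g\circ(y\cdot F)$, the identity $(g,y)(F_n,S_n)=(g\circ(y\cdot F_n),yS_n)$ together with pointwise convergence shows that $\mathbf{bnd}$ is $\extsym{H}$-equivariant. Hence the hitting measure $\nu\coloneqq \mathbf{bnd}_*(\P)$ on $\Inj{H}$ is $\mu$-stationary, and $(\Inj{H},\nu)$ is a $\mu$-boundary.

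It then remains to show that $\nu$ is not concentrated on a single function, and here I would argue by contradiction, using non-degeneracy in an essential way. If the boundary were trivial, then $\nu=\delta_{F_0}$ for some fixed $F_0\in\Inj{H}$. Stationarity $\nu=\sum_{g\in\extsym{H}}\mu(g)\,g\nu$ then forces $g\cdot F_0=F_0$ for every $g\in\supp{\mu}$, so that $\supp{\mu}$ is contained in the stabilizer of $F_0$. Since this stabilizer is a subgroup of $\extsym{H}$ and $\supp{\mu}$ generates $\extsym{H}$ as a semigroup, the stabilizer must be all of $\extsym{H}$; in particular every $g\in\Sym{H}$ would satisfy $g\circ F_0=F_0$.

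The final step is to observe that no injective function can be fixed by all of $\Sym{H}$: the relation $g\circ F_0=F_0$ says that $g$ fixes every point of $\mathrm{Im}(F_0)$, and since $F_0$ is injective and $H$ is infinite, $\mathrm{Im}(F_0)$ is infinite; choosing $g$ to be the transposition $\delta^{q}_{p}$ of two distinct points $p,q\in\mathrm{Im}(F_0)$ yields a contradiction. Therefore $\nu$ is non-trivial, and so is the Poisson boundary. I expect the conceptual heart of the argument, rather than any single computation, to be the use of non-degeneracy to exclude a deterministic limit: without it the limit function can be almost surely constant even when the permutation coordinate stabilizes, as in the virtually cyclic examples discussed in the introduction. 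The only remaining care is in verifying the equivariance of $\mathbf{bnd}$ and in checking that stationarity of a point mass genuinely forces every element of $\supp{\mu}$ into the stabilizer of $F_0$.
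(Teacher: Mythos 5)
Your proof is correct and follows essentially the same route as the paper's: assume triviality, deduce that the limit function $F_\infty$ is almost surely a fixed $F_0$, use non-degeneracy to conclude that all of $\Sym{H}$ (acting via $f\cdot F_0=f\circ F_0$) stabilizes $F_0$, and derive a contradiction with a transposition moving a point of $\mathrm{Im}(F_0)$. The only difference is presentational: you spell out the $\mu$-boundary structure, the stationarity-of-a-point-mass argument, and the stabilizer-subgroup step, which the paper compresses into two sentences.
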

\begin{proof}
	Denote by $F_{\infty}:H\to H$ the limit function of the permutation coordinate of the random walk $(F_n,S_n)$ on $\extsym{H}$. If the Poisson boundary is trivial, then the function $F_{\infty}$ is the same for almost every trajectory of the random walk. Consider the left action of $\extsym{H}$ on the space of limit functions $F_{\infty}$. The stabilization together with the non-degeneracy assumption imply that $F_{\infty} = f\circ F_{\infty}, \text{ for all } f\in \Sym{H}.$ This is a contradiction since every function $f\in \Sym{H}$ that acts non-trivially on $F_{\infty}(H)$ will not satisfy this equation.
\end{proof}
A similar result holds for wreath products $A\wr B$, where it is enough to suppose that the semigroup generated by $\supp{\mu}$ contains two distinct elements with equal projections to $B$ (see the proof of Theorem 3.3 in \cite{Kaimanovich1991} and the discussion after Lemma 1.1 in \cite{Erschler2011}). Below, we provide an example that shows that in $\extsym{H}$ an analogous hypothesis does not suffice to guarantee the non-triviality of the Poisson boundary.

We will consider subgroups of $\extsym{\Z}$ whose projection to $\Z$ are proper subgroups of $\Z$. Among these subgroups, we can find wreath products (Proposition \ref{prop: extsymH contains wreath products whenever H is not co-Hopfian}), which admit finite first moment measures with non-trivial boundary. Hence, in order to find subgroups with trivial boundary we will also need to restrict the possible values for the permutation coordinate of the elements of these subgroups.

\begin{exmp}\label{example: self correcting configuration}
	Let us fix $M\ge 3$ and consider the finite subset $\Sigma_M\leqslant \Sym{\Z}$ formed by all $f\in \Sym{\Z}$ such that $\supp{f}\subseteq [-M,M]$, and $f(x)=x+M$ whenever $-M\le x\le 0$. That is, a permutation $f\in \Sigma_M$ coincides with the identity outside of $[-M,M]$, acts as a translation by $M$ on the interval $[-M,0]$ and maps bijectively the set $[1,M]$ to $[-M,-1]$.
	
	Let $K$ be the subgroup of $\extsym{\Z}$ generated by elements of the form $(f,M+1)$, for $f\in \Sigma_M$. Let us denote by $S_K$ the set formed by the above generators, together with their inverses. Note that for $f\in \Sigma_M$, the inverse of $(f,M+1)$ is given by $(\tilde{f},-(M+1))$, where 
	\begin{equation*}
		\tilde{f}(x)=\begin{cases}
			x+M, &\text{ if }x\in[-M-1,-1],\\
			-M-1+f^{-1}(x+M+1), &\text{ if }x\in[-2M-1,-M-2], \text{ and}\\
			x,&\text{ otherwise}.
		\end{cases}
	\end{equation*}
	
	In particular, we see that the values of $\tilde{f}$ are uniquely determined on the interval $[-M-1,1]$. Furthermore, when multiplying two elements $(f_1,j_1(M+1)),(f_2,j_2(M+1)) \in S_K$, one obtains \begin{equation*}(f_1,j_1(M+1))\cdot (f_2,j_2(M+1)) =(f_3,(j_1+j_2)(M+1)),\end{equation*} 
	where
	\begin{equation*}
		f_3=f_1 \circ \left(j_1(M+1)\cdot f_2\right),
	\end{equation*}
	and the places where $f_3$ is not uniquely determined are a translation of those of $f_2$, which is a set of size $M$. From this, one can see that the elements of $K$ are all of the form $g=(f,j(M+1))$, for $j\in \Z$ and for $f\in \Sym{\Z}$ a permutation whose values are uniquely determined by the value of $j$, except for those of the interval $[(j-1)(M+1)+1,j(M+1)-1]$, which has size $M$. Further, if the element $g$ is a product of $n$ generators in $S_K$, then it holds that $|j|\le n$. Thus, the growth function of $K$ is bounded above by $nM!$ and as a result, the group $K$ has a linear growth function. 
	
	Gromov's Theorem on groups of polynomial growth \cite{Gromov1981} implies that $K$ is virtually nilpotent, and the Bass–Guivarc'h formula \cite{Bass1972,Guivarch1973} shows that any virtually nilpotent group of linear growth is virtually $\Z$. We conclude that $K$ is virtually cyclic, and hence any random walk supported on $K$ has a trivial Poisson boundary. Nonetheless, Lemma \ref{lem: main stabilization lemma} implies that the permutation coordinate will stabilize for a given measure $\mu$ with a finite first moment measure supported on $K$ with a non-centered projection to $\Z$.  Since we already know that the Poisson boundary is trivial, it must hold that almost every trajectory of the random walk stabilizes to the same limit function. Indeed, if the projection of $\mu$ to $\Z$ has positive drift, then the limit function $F_{\infty}$ is almost surely given by $F_{\infty}(x)=x+M$, $x\in \Z$, whereas if the drift is negative, the limit function is $F_{\infty}(x)=x-M$, $x\in \Z$.
	
\end{exmp}

It is natural to draw a parallel with Proposition \ref{prop: non-degenerate measure + stabilization imply non-trivial boundary}. In the above example, the permutation coordinate of every trajectory stabilizes to the same limit function $F$, which is not surjective. In the case of positive drift in the projection to $\Z$, the image of $F$ is  $\Z\backslash [-M,-1]$. We saw that any element $g\in K$ is of the form $g=(f,j(M+1))$, with $j\in \Z$ and where the values of $f$ are uniquely determined, except for those of the interval $[(j-1)(M+1) +1,j(M+1)-1]$. When looking at the action of $g$ on the limit function $F$ we get
\begin{equation*}
	g\cdot F=f\circ (j(M+1)\cdot F),
\end{equation*} 
and we note that the function $j(M+1)\cdot F$ has as its image $\Z\backslash [(j-1)(M+1) +1,j(M+1)-1]$, which is exactly the set where $f$ is uniquely determined. In other words, all the elements of $K$ act trivially on $F$.

We now state and prove the stabilization lemma in its general form, and then prove Lemma \ref{lem: main stabilization lemma}. In addition to transience of the random walk induced on $H$, the second hypothesis below is that $\E\left( |\supp{\sigma_1}| \right)<\infty$. This means that the number of elements on which a randomly chosen permutation $\sigma_1$ acts non-trivially has a finite expectation. 

\begin{lem}\label{lem. stabilization key lemma (Borel-Cantelli)} Let $H$ be a countable group and $\mu$ be a probability measure on $\extsym{H}.$ Suppose that $\mu$ induces a transient random walk on $H$ and that $\E\left( |\supp{\sigma_1}| \right)<\infty$. Then the permutation coordinate of the $\mu$-random walk on $\extsym{H}$ stabilizes.
\end{lem}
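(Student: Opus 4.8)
The plan is to reduce almost sure stabilization to a summable series and invoke the first Borel--Cantelli lemma, as the label of the statement suggests. Write $(\sigma_k,X_k)$, $k\ge 1$, for the i.i.d.\ increments with law $\mu$, so that $(F_n,S_n)=(\sigma_1,X_1)\cdots(\sigma_n,X_n)$. The multiplication rule in $\extsym{H}$ recalled in Section \ref{section: extensions of finitary symmetric groups} gives the recursion $F_n=F_{n-1}\circ(S_{n-1}\cdot\sigma_n)$, while $S_n=S_{n-1}X_n$. Here $(S_n)_{n\ge 0}$ is precisely the random walk on $H$ induced by $\mu$, and its transience means that the Green function $G(e_H,s)\coloneqq\sum_{m\ge 0}\P(S_m=s)$ is finite for every $s$.

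First I would isolate, for a fixed $h\in H$, the event that the value at $h$ changes at step $n$. Since each $F_{n-1}$ is a bijection, $F_n(h)=F_{n-1}\bigl((S_{n-1}\cdot\sigma_n)(h)\bigr)$ differs from $F_{n-1}(h)$ exactly when $(S_{n-1}\cdot\sigma_n)(h)\ne h$, that is, when $h\in\supp{S_{n-1}\cdot\sigma_n}=S_{n-1}\cdot\supp{\sigma_n}$, equivalently $S_{n-1}^{-1}h\in\supp{\sigma_n}$. Call this event $A_n$. If only finitely many $A_n$ occur then $F_n(h)$ is eventually constant, so it suffices to prove $\sum_{n\ge1}\P(A_n)<\infty$ for each fixed $h$; stabilization in the sense of Definition \ref{def: stabilization of permutation coordinate} then follows by taking a countable intersection over $h\in H$, using that a countable union of null sets is null.

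The structural point driving the estimate is that $\sigma_n$, being part of the $n$-th increment, is independent of $S_{n-1}=X_1\cdots X_{n-1}$. Setting $p(g)\coloneqq\P(g\in\supp{\sigma_n})$ for the permutation marginal of $\mu$, Tonelli gives $\sum_{g\in H}p(g)=\E\bigl(|\supp{\sigma_1}|\bigr)$. Conditioning on $S_{n-1}$ then yields $\P(A_n)=\E\bigl[p(S_{n-1}^{-1}h)\bigr]=\sum_{s\in H}\P(S_{n-1}=s)\,p(s^{-1}h)$, and summing over $n$ while interchanging the non-negative sums produces
\[
\sum_{n\ge1}\P(A_n)=\sum_{s\in H}p(s^{-1}h)\sum_{m\ge0}\P(S_m=s)=\sum_{s\in H}p(s^{-1}h)\,G(e_H,s).
\]
To close the argument I would bound the Green function uniformly: for a random walk on a group the standard decomposition $G(e_H,s)=\P(\text{the walk ever reaches }s)\cdot G(s,s)$ together with the translation invariance $G(s,s)=G(e_H,e_H)$ gives $G(e_H,s)\le G(e_H,e_H)$. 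Substituting $g=s^{-1}h$ then bounds the sum by $G(e_H,e_H)\sum_{g\in H}p(g)=G(e_H,e_H)\,\E\bigl(|\supp{\sigma_1}|\bigr)<\infty$.

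I do not expect a genuine obstacle once the event $A_n$ is correctly identified: the argument is essentially forced. The two points that require care are the description of $A_n$ via the bijectivity of $F_{n-1}$, and the interchange of summation turning $\sum_n\P(A_n)$ into a Green-function sum. The two hypotheses enter exactly where expected: finiteness of $\E(|\supp{\sigma_1}|)$ makes $\sum_g p(g)$ finite, and transience makes the Green function finite and uniformly bounded by $G(e_H,e_H)$; either failing would break summability.
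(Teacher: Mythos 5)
Your proposal is correct and follows essentially the same route as the paper: fix $h$, identify the change event via the recursion $F_n=F_{n-1}\circ(S_{n-1}\cdot\sigma_n)$, use independence of $\sigma_n$ from $S_{n-1}$ to write $\P(A_n)$ as a sum over the position, interchange the non-negative sums, bound the Green function uniformly by $G(e_H,e_H)$ via the first-passage decomposition and translation invariance, and conclude by Borel--Cantelli together with countability of $H$. The only cosmetic difference is that you characterize $A_n$ exactly (using injectivity of $F_{n-1}$) where the paper settles for the inclusion $A_n\subseteq\{S_{n-1}^{-1}h\in\supp{\sigma_n}\}$, which is all that is needed.
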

\begin{proof}
	Fix an arbitrary element $h\in H$, and for every $n\ge 1$ consider the event $A_n=\left\{ F_{n+1}(h)\neq F_n(h)\right\}.$
	We will prove that $\sum_{n\ge 0} \P(A_n)<+\infty$, so that the Borel-Cantelli Lemma \cite[Lemma VIII.3.1]{Feller1968} will imply that almost surely only finitely many of these events happen. Since $H$ is countable, the above implies that the stabilization of $F_n(h)$ for every $h\in H$ happens with probability $1$.

	Using the group operation and the definition of the action of $H$ on $\Sym{H}$, we see that
	$$
	F_{n+1}(h)=F_n\circ (S_n\cdot \sigma_{n+1}) (h)= F_n \left(S_n\sigma_{n+1}(S^{-1}_n h) \right),
	$$
	so that $A_n\subseteq\{\sigma_{n+1}(S^{-1}_n h)\neq S^{-1}_n h\}.$ With this, we can obtain an upper estimate for the probability of $A_n$. Indeed,
	\begin{align*}
		\P(A_n)&\le\P\left(\sigma_{n+1}(S^{-1}_n h)\neq S^{-1}_n h \right)\\
		&=\sum_{ x\in H }\P\left(\sigma_{n+1}(x^{-1} h)\neq x^{-1} h \right)\P(S_n=x)\\
		&=\sum_{ x\in H }\P\left(x^{-1} h \in \supp{\sigma_{n+1}} \right)\P(S_n=x)\\
		&=\sum_{ x\in H }\sum_{(f,y)\in \extsym{H}}\mathds{1}_{\left\{x^{-1} h \in \supp{f}\right\}}\mu(f,y)\P(S_n=x).
	\end{align*}
	
	Now we are going to sum over all $n$. Note that since we assume that the projection of $\mu$ to $H$ is transient, there exists a constant $C>0$ such that for every $x\in H$, $\sum_{n\ge 1}\P(S_n=x)<C.$
	Indeed, the left-hand side is the expected number of visits to $x$, which is equal to the probability of ever reaching $x$ multiplied by the expected number of visits to $e_H$. The first term is at most $1$ (since it is a probability), and the second one is a finite constant, thanks to our transience hypothesis. 
	
	We can conclude that
	\begin{align*}
		\sum_{n\ge 1} \P(A_n) &\le \sum_{n\ge 1}\sum_{ x\in H }\sum_{(f,y)\in \extsym{H}}\mathds{1}_{\left\{x^{-1} h \in \supp{f}\right\}}\mu(f,y)\P(S_n=x)\\
		&\le C \sum_{(f,y)\in \extsym{H}}\sum_{ x\in H }\mathds{1}_{\left\{x^{-1} h \in \supp{f}\right\}}\mu(f,y)\\
		&= C\sum_{(f,y)\in \extsym{H}}|\supp{f}|\mu(f,y)=C\E(|\supp{\sigma_1}|)<\infty.
	\end{align*}
	which is finite thanks to our hypothesis.
\end{proof}
\begin{proof}[Proof of Lemma \ref{lem: main stabilization lemma}]
	
	When $H$ is finitely generated, so is $\extsym{H}$ and the condition $\E(|\supp{\sigma_1}|)<\infty$ follows from the finite first moment hypothesis. Indeed, recall the definition of $\std$ from Lemma \ref{lem: basic properties of FSym(H)rtimes H}. Each transposition in $\std$ changes the value of exactly two elements of $H$, and hence a geodesic word for $(f,y)$ needs at least as many transpositions as half the size of the support of $f$. In other words, we have the inequality $|\supp{f}|\le 2\|(f,y)\|_{\std}$ for every $(f,y)\in \extsym{H}$. This implies that
	\begin{align*}
		\E(|\supp{\sigma_1}|)&=	\sum_{(f,y)\in \extsym{H}}|\supp{f}|\mu(f,y)\\&\le \sum_{(f,x)\in \extsym{H}}2\|(f,x)\|_{\std}\mu(f,x)<\infty,
	\end{align*}
	and so we can apply Lemma \ref{lem. stabilization key lemma (Borel-Cantelli)}.
\end{proof}

By combining Proposition \ref{prop: non-degenerate measure + stabilization imply non-trivial boundary} with Lemma \ref{lem. stabilization key lemma (Borel-Cantelli)} we obtain the following corollary.

\begin{cor}\label{cor: non degenerate + transient + finite expected support implies nontrivial poisson boundary}
	Let $\mu$ be a non-degenerate probability measure on $\extsym{H}$ that induces a transient random walk on $H$, such that $\E(|\supp{\sigma_1}|)<\infty$. Then the Poisson boundary of $(\extsym{H},\mu)$ is non-trivial.
\end{cor}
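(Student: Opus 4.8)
The plan is simply to chain together the two results referenced immediately before the statement, so there is essentially no new content to produce. First I would invoke Lemma \ref{lem. stabilization key lemma (Borel-Cantelli)}: its two hypotheses---that $\mu$ induces a transient random walk on $H$, and that $\E(|\supp{\sigma_1}|)<\infty$---are exactly two of the three hypotheses of the corollary, so the lemma immediately yields that the permutation coordinate $\{F_n\}_{n\ge 1}$ of the $\mu$-random walk stabilizes in the sense of Definition \ref{def: stabilization of permutation coordinate}.

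Next I would feed this stabilization, together with the one remaining hypothesis of the corollary (non-degeneracy of $\mu$), into Proposition \ref{prop: non-degenerate measure + stabilization imply non-trivial boundary}, whose conclusion is precisely the non-triviality of the Poisson boundary of $(\extsym{H},\mu)$. The single point that deserves a word of care is that Proposition \ref{prop: non-degenerate measure + stabilization imply non-trivial boundary} is stated for $H$ \emph{infinite}, whereas the corollary does not list this assumption explicitly. This causes no difficulty: transience of the induced random walk on $H$ already forces $H$ to be infinite, since every random walk on a finite group is recurrent. Thus the infiniteness hypothesis is automatically met and the proposition applies verbatim.

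I expect no genuine obstacle here, as all the substantive work has already been carried out---in the lemma via the Borel--Cantelli estimate on the events $\{F_{n+1}(h)\neq F_n(h)\}$, and in the proposition via the observation that a trivial boundary would force $F_{\infty}=f\circ F_{\infty}$ for every $f\in\Sym{H}$, contradicting the non-triviality of $F_{\infty}$ produced by stabilization. Combining the two statements therefore gives the corollary in one line.
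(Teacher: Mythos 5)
Your proposal is correct and follows exactly the paper's route: the corollary is obtained by combining Lemma \ref{lem. stabilization key lemma (Borel-Cantelli)} (stabilization from transience and $\E(|\supp{\sigma_1}|)<\infty$) with Proposition \ref{prop: non-degenerate measure + stabilization imply non-trivial boundary} (non-degeneracy plus stabilization gives a non-trivial boundary). Your extra remark that transience of the induced walk forces $H$ to be infinite, so the proposition's infiniteness hypothesis is automatic, is a correct and worthwhile point of care that the paper leaves implicit.
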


\begin{rem}\label{rem: not stabilization of FSym Z3 for nondegenerate rw}
	It may happen that the permutation coordinate of a random walk on $\extsym{H}$ with a transient projection to $H$ does not stabilize. Indeed, whenever $H$ is amenable, the group $\extsym{H}$ is amenable (Lemma \ref{lem: basic properties of FSym(H)rtimes H}) and it was shown by Rosenblatt \cite{Rosenblatt1981} and Kaimanovich and Vershik \cite[Theorem 4.4]{KaimanovcihVershik1983} that every amenable group admits a non-degenerate probability measure with a trivial Poisson boundary. For such $\mu$, Proposition \ref{prop: non-degenerate measure + stabilization imply non-trivial boundary} implies that the permutation coordinate cannot stabilize. Furthermore, if $H$ is not virtually $\Z$ or virtually $\Z^2$, the projected random walk to $H$ will be transient, due to \cite{Varopoulos1986} (see also \cite[Theorem 3.24]{Woess2000}).
\end{rem}

The above remark implies that $\extsym{\Z^d}$, $d\ge 3$, carries a random walk with a non-degenerate symmetric step distribution, with a transient projection to $\Z^d$ and such that the permutation coordinate does not stabilize. On the other hand, the existence of such measures for $d=1$ or $d=2$ is not immediate since the projected random walk to the corresponding base group could be recurrent. In the next proposition, we prove that for $H=\Z$ one can choose these measures so that the projection to $\Z$ is transient, by modifying the proofs of \cite[Theorem 4.4]{KaimanovcihVershik1983} and \cite[Theorem 1.10]{Rosenblatt1981}.

\begin{prop}\label{prop: existence of measure with trivial boundary but transient projection} There exists a non-degenerate symmetric probability measure $\mu$ on $\extsym{\Z}$, that induces a transient random walk on $\Z$ and such that the Poisson boundary of $(\extsym{\Z},\mu)$ is trivial.
\end{prop}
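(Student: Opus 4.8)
The plan is to construct $\mu$ explicitly as a convex combination of uniform measures on an increasing sequence of finite symmetric sets, following the scheme by which Kaimanovich--Vershik \cite[Theorem 4.4]{KaimanovcihVershik1983} and Rosenblatt \cite[Theorem 1.10]{Rosenblatt1981} produce Liouville measures on amenable groups, but choosing the building blocks so that the step distribution induced on $\Z$ has an infinite first moment. For parameters $\ell_n\ge m_n+2$ with $\ell_n,m_n\to\infty$, set $F_n=\{(\sigma,k)\in\extsym{\Z}:\supp{\sigma}\subseteq[-\ell_n,\ell_n],\ |k|\le m_n\}$. Each $F_n$ can be taken symmetric (replacing it by $F_n\cup F_n^{-1}$, which affects neither its $\Z$-marginal nor its size up to a factor $2$), and the sequence is right-F\o lner: right multiplication by a translation generator $(\id,\pm 1)$ leaves the permutation coordinate untouched and only shifts the window of the $\Z$-coordinate, with relative error $O(1/m_n)$, while right multiplication by the transposition generator $(\delta_1,0)$ precomposes $\sigma$ by a transposition supported inside the window and thus permutes $F_n$ into itself up to a boundary of relative size $O(1/\ell_n)$. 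Writing $u_n$ for the uniform probability measure on $F_n$, I set $\mu=\sum_n c_n u_n$ with $c_n>0$, $\sum_n c_n=1$; since each $F_n$ is symmetric and the supports exhaust $\extsym{\Z}$, the measure $\mu$ is symmetric and non-degenerate.

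The weights $c_n$ and the $m_n$ are then tuned to control the projection $\bar\mu$ of $\mu$ to $\Z$. As the $\Z$-marginal of $u_n$ is uniform on $[-m_n,m_n]$, one has $\bar\mu(\{j\})=\sum_{n:\,m_n\ge|j|}c_n/(2m_n+1)$, and taking for instance $m_n=2^n$ and $c_n\asymp 2^{-\alpha n}$ for a fixed $\alpha\in(0,1)$ gives $\bar\mu(\{j\})\asymp|j|^{-1-\alpha}$, hence $\P(|X|>t)\asymp t^{-\alpha}$. This symmetric step distribution on $\Z$ has infinite first moment and characteristic function satisfying $1-\widehat{\bar\mu}(t)\asymp|t|^{\alpha}$ near $0$; since $\alpha<1$ the integral $\int_{-\delta}^{\delta}\mathrm{d}t/(1-\widehat{\bar\mu}(t))$ converges, so the Chung--Fuchs criterion shows that the projected random walk on $\Z$ is transient. (Consistently with Corollary \ref{cor: non degenerate + transient + finite expected support implies nontrivial poisson boundary}, the permutation blocks force $\E(|\supp{\sigma_1}|)=\infty$, so the stabilization lemma does not apply and there is no contradiction with triviality.)

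It remains to prove triviality of the Poisson boundary, for which I adapt the asymptotic-invariance argument underlying the two cited theorems. Since $\mu$ is symmetric, triviality is equivalent to $\lim_{n\to\infty}\|\delta_g*\mu^{*n}-\mu^{*n}\|_1=0$ for every $g\in\extsym{\Z}$, and the set of such $g$ is a subgroup, so it suffices to treat the generators in $\std$. The mechanism is that $\mu^{*n}$ incorporates, with probability tending to $1$, a number $\approx c_k n$ of F\o lner blocks $u_k$ for each fixed large $k$, and the cumulative smearing produced by these blocks spreads $\mu^{*n}$ over larger and larger regions of $\extsym{\Z}$. For the transposition generator $(\delta_1,0)$ the permutation coordinate is smeared over symmetric groups of unbounded size, so pre- and post-composition by a fixed transposition becomes negligible; for the translation generator $(\id,1)$ the $\Z$-coordinate spreads (by transience) to width tending to infinity, so the unit shift it induces becomes negligible. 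In both cases $\|\delta_g*\mu^{*n}-\mu^{*n}\|_1\to 0$, giving triviality.

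The hard part will be making this last step rigorous while retaining transience, and I expect it to be the main obstacle. The naive one-step estimate only bounds $\|\delta_g*\mu^{*n}-\mu^{*n}\|_1$ by the fixed quantity $\|\delta_g*\mu-\mu\|_1$, so genuine decay must come from the joint effect of the many interspersed blocks; in the non-abelian group $\extsym{\Z}$ one cannot simply collect these blocks, because the blocks are separated by other factors and the heavy-tailed translations couple the two coordinates: a large translation shifts the support of the permutation far along $\Z$, so the permutation and the $\Z$-coordinate cannot be estimated independently. The proof therefore requires quantifying how the block convolutions $u_{k_1}*\cdots*u_{k_n}$ spread \emph{both} coordinates jointly, and choosing the schedules $\ell_n\gg m_n$ together with the weights $c_n$ so that the invariance defect of the blocks decays fast enough in each fixed direction while the $\Z$-tail stays heavy enough for transience. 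Verifying this balance, by revisiting the estimates of \cite{KaimanovcihVershik1983,Rosenblatt1981} in the presence of the infinite-mean translation part, is the technical heart of the argument.
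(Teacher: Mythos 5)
Your overall scheme---taking $\mu=\sum_n c_n u_n$ with $u_n$ uniform on an exhausting sequence of symmetric F\o lner sets, arranged so that the projection to $\Z$ is transient---is the same as the paper's, and the transience half of your construction is sound: with $m_n=2^n$ and $c_n\asymp 2^{-\alpha n}$ the marginal is indeed $\asymp |j|^{-1-\alpha}$, and Chung--Fuchs gives transience for $\alpha<1$. The genuine gap is the one you flag yourself: triviality of the Poisson boundary is never established, and it is not a matter of ``revisiting the estimates''. Your measure has infinite entropy (by concavity, $H(\mu)\ge\sum_n c_n\log|F_n|\gtrsim\sum_n 2^{(1-\alpha)n}n=\infty$), so the entropy criterion is unavailable, and F\o lner-ness of the individual blocks by itself gives no decay of $\|\delta_g*\mu^{*n}-\mu^{*n}\|_1$. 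What makes the Kaimanovich--Vershik/Rosenblatt argument work is a \emph{recursive} coupling of sets to weights: the $k$-th set must contain, and be $\varepsilon_k$-almost invariant under, the huge product set $(F_{k-1})^{n_k}$, where $n_k$ is dictated by the weights through $(c_1+\cdots+c_{k-1})^{n_k}\le\varepsilon_k$. Your fixed schedule cannot meet this: since $\sum_{j\ge k}c_j\asymp 2^{-\alpha k}$, one is forced to take $n_k\gtrsim 2^{\alpha k}$, so the translation window of $F_k$ must exceed $n_k m_{k-1}$, which iterates to $m_k\gtrsim 2^{\alpha k(k+1)/2}$ and contradicts $m_k=2^k$. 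Conversely, if you enlarge the windows to satisfy the recursion, the marginal's tail becomes slowly varying rather than polynomial, and your Chung--Fuchs computation (which rests on $1-\widehat{\bar\mu}(t)\asymp|t|^{\alpha}$) no longer applies. So the two halves of your proposal pull in opposite directions; the proof is not merely incomplete but, with these parameters, unrepairable by the method you invoke.

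The paper resolves this tension the other way around. It keeps the Kaimanovich--Vershik construction verbatim---F\o lner sets $A_m$ chosen recursively to contain and be $\varepsilon_m$-almost invariant under $K_m\cup(A_{m-1})^{n_m}$, with weights $t_m$ tied to $n_m$ and $p_m$ as above---so that triviality of the boundary can simply be quoted from \cite{KaimanovcihVershik1983}. Instead of prescribing a power-law marginal, it imposes one additional requirement on the projections of the F\o lner sets: for $m\ge p_n$ and $1\le q\le n$, the convolution power $(\pi_*\alpha_m)^{*q}$ gives mass less than $\varepsilon_n$ to the interval $[-(n-1)N_{m-1},(n-1)N_{m-1}]$, i.e.\ Equation \eqref{eq: symmetric projection to Z is very large}. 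Transience then follows not from Chung--Fuchs but from a summable bound on return probabilities: writing $\nu^{*n}$ as a sum over multi-indices of blocks and splitting according to whether the largest block index is $<p_n$ or $\ge p_n$, commutativity of convolution on $\Z$ lets one pull the largest block out and conclude $\nu^{*n}(0)\le 2\varepsilon_n$, whence $\sum_n\nu^{*n}(0)<\infty$. If you want to repair your write-up, this is the route: drop the rigid power-law schedule, impose the recursion together with a spreading condition of this type on the projections, and prove transience by summing return probabilities rather than via the characteristic function.
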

\begin{proof}
	Let us write $G=\extsym{\Z}$ and $\pi:G\to \Z$ the projection map. 
	
	Let $\{K_i\}_{i\ge 1}$ be an increasing sequence of finite subsets of $G$ such that $e\in K_1$ and $G=\bigcup_{i\ge 1}K_i$. Let us consider
	\begin{itemize}
		\item A sequence $\{t_i\}_{i\ge 1}$ of positive numbers such that $\sum_{i\ge 1}t_i=1$,
		
		\item a decreasing sequence $\{\varepsilon_i\}_{i\ge 1}$ of positive numbers such that $\sum_{i\ge 1}\varepsilon_i<+\infty$, and
		
		\item sequences of integers $\{n_i\}_{i\ge 1}$ and $\{p_i\}_{i\ge 1}$ with $n_i,p_i\xrightarrow[i\to \infty]{}+\infty$ such that $$(t_1+t_2+\cdots +t_{i-1})^{n_i}\le \varepsilon_i \text{   and   } (t_1+t_2+\cdots + t_{p_i-1})^i\le \varepsilon_i.$$
	\end{itemize}
	For example, one can choose $t_i=2^{-i}$, $p_i=\lfloor \log(i)\rfloor +1$, $\varepsilon_i=\left(1-2^{- \log(i)}\right)^i$ and 
	$$
	n_i=\left\lceil \frac{i\log\left( 1-2^{-\log(i)}\right) }{\log\left({1-2^{-i+1}} \right)} \right \rceil.
	$$
	Since $G$ is amenable, we can find a sequence of symmetric F\o lner sets $\{A_m\}_{m\ge 1}$ such that, denoting $\alpha_m=\frac{1}{|A_m|}\chi_{A_m}$ the uniform probability measure on $A_m$, we have
	\begin{equation*}\label{eq: invariance uniform measures on Folner sets}
		\|\alpha_{m}-g\alpha_{m}\|\le \varepsilon_m, \text{ for all }g\in  K_m\cup (A_{m-1})^{n_m}.
	\end{equation*}
	These sets can be chosen so that $A_m$ contains  $K_m\cup (A_{m-1})^{n_m}$ and that $\pi(A_m)\subseteq \Z$ is a symmetric interval around $0$, say $\pi(A_m)=[-N_m,N_m]$, such that $\{N_m\}_{m}$ are increasing positive integers, and that whenever $m\ge p_n$ we have
	\begin{equation}\label{eq: symmetric projection to Z is very large}
		\left(\pi_{*}\alpha_m\right)^{* q}\left(\left[-(n-1)N_{m-1},(n-1)N_{m-1}\right] \right)<\varepsilon_n,
	\end{equation}
	for every $1\le q\le n$. In other words, we guarantee that for every $m\ge p_n$, the first $n$ steps of the projected random walk on $\Z$ (whose increments distribute according to $\pi_{*}\alpha_m$) stay out of the set $\left[-(n-1)N_{m-1},(n-1)N_{m-1}\right]$ with high probability. The measure we are looking for is $\mu\coloneqq \sum_{m\ge 1}t_m \alpha_m.$
	
	Indeed, $\mu$ is a symmetric non-degenerate measure on $G$, which has a trivial Poisson boundary, just as in the proof \cite[Theorem 4.3]{KaimanovcihVershik1983}. Denote $\nu=\pi_{*}\mu$ the projection of $\mu$ to $\Z$, and let us prove that $\nu$ induces a transient random walk on $\Z$.
	
	By construction, we have $\nu=\sum_{m\ge 1}t_m \beta_m,$ where $\beta_m=\pi_{*}\alpha_m$. We will prove that for every $n\ge 1$ one has $\nu^{*n}(0)\le 2\varepsilon_n,$ and since $\{\varepsilon_n\}_{n\ge 1}$ is summable, this will imply that the $\nu$-random walk on $\Z$ is transient.

	Note that 
	$$
	\nu^{*n}=\sum_{\mathbf{k}}t_{k_1}\cdots t_{k_n}\beta_{k_1}*\cdots * \beta_{k_n},
	$$
	where $\mathbf{k}=(k_1,\ldots,k_n)$ ranges over all possible multi-indices. We write $\nu^{*n}=\gamma_1+\gamma_2$, where
	$$
	\gamma_1=\sum_{|\mathbf{k}|<p_n}t_{k_1}\cdots t_{k_n}\beta_{k_1}*\cdots * \beta_{k_n}, \text{ for }|\mathbf{k}|=\max_{1\le i\le n}k_i,
	$$
	and $\gamma_2=\nu^{*n}-\gamma_1.$
	
	First, note that our choice of $p_n$ guarantees that
	$$
	\gamma_1(0)\le \sum_{|\mathbf{k}|<p_n}t_{k_1}\cdots t_{k_n}=(t_1+\cdots +t_{p_n-1})^n\le \varepsilon_n.
	$$
	Now let us bound the value of $\gamma_2(0)$. Fix a multi-index $\mathbf{k}$ such that $|\mathbf{k}|\ge p_n$ and consider an index $j$ such that $k_j$ is the largest entry. Since $\Z$ is abelian, the convolution of measures is an abelian operation, and we can write $\beta_{k_1}* \cdots * \beta_{k_n}=\beta_{k_j}^{* q}*\theta,$
	where $q\ge 1$ and $\theta$ is an $(n-q)$-th convolution of $\beta_i$'s with $i<k_j$, so that it satisfies $$\supp{\theta}\subseteq [-(n-1)N_{k_j-1},(n-1)N_{k_j-1}].$$ With this, we get
	\begin{align*}
		\gamma_2(0)&=\sum_{x\in \Z}\beta_{k_j}^{*q}(x)\theta(-x)\\
		&=\sum_{|x|\le (n-1)N_{k_j-1}}\beta_{k_j}^{*q}(x)\theta(-x)\\
		&\le \sum_{|x|\le (n-1)N_{k_j-1}}\beta_{k_j}^{*q}(x)\\
		&=\beta_{k_j}^{*q}\left(\left[-(n-1)N_{k_j-1},(n-1)N_{k_j-1}\right]\right)\\
		&\le \varepsilon_n, 
	\end{align*}
	where we used the fact that $\beta_{k_j}$ satisfies Equation \eqref{eq: symmetric projection to Z is very large}, with $k_j\ge p_n$.
	
	We conclude that $\nu^{* n}(0)=\gamma_1(0)+\gamma_2(0)\le 2\varepsilon_n,$ which finishes the proof.
\end{proof}
If we weaken the hypothesis $\E\left( |\supp{\sigma_1}| \right)<\infty$ from Lemma \ref{lem. stabilization key lemma (Borel-Cantelli)} it is possible that the permutation coordinate never stabilizes. Indeed, with ideas similar to an example of \cite{Kaimanovich1983}, we obtain the following.

\begin{prop} \label{prop: a first moment condition is necessary for stabilization} The group $\extsym{\Z}$ admits probability measures $\mu$ with an infinite first moment and a finite $(1-\varepsilon)$-moment, for every $0<\varepsilon<1$, that induce a transient random walk on $\Z$ and for which the permutation coordinate of the $\mu$-random walk does not stabilize. Such measures can be chosen to satisfy $\E(|\supp{\sigma_1}|)=\infty$ and $\E(|\supp{\sigma_1}|^{1-\varepsilon})<\infty$ for every $0<\varepsilon<1$.
\end{prop}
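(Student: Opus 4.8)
The plan is to construct $\mu$ explicitly as a mixture of a drift term on $\Z$ and a family of large-support permutations whose size has an exact $1/\ell$ tail, and then to detect the failure of stabilization by a \emph{conditional} Borel--Cantelli argument. For each $\ell\ge 1$ let $c_\ell\in\Sym{\Z}$ be the cyclic permutation of the interval $[-\ell,\ell]$, so that $\supp{c_\ell}=[-\ell,\ell]$ has size $2\ell+1$ and $\mathrm{Disp}(c_\ell)=4\ell$. Set $p_\ell=\tfrac{1}{\ell(\ell+1)}$, so that $\sum_{\ell\ge1}p_\ell=1$ and $\sum_{k\ge \ell}p_k=\tfrac1\ell$, and define $\mu$ on $\extsym{\Z}$ by $\mu\big((\id,1)\big)=\tfrac12$ and $\mu\big((c_\ell,0)\big)=\tfrac12 p_\ell$ for $\ell\ge1$. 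Writing $L$ for the random index of the chosen permutation, we then have $\P(L\ge \ell)=1/\ell$.

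Next I would check the transience and moment conditions. The projected increments take the values $1$ and $0$ each with probability $\tfrac12$, so $S_n=X_1+\cdots+X_n\in\{0,\dots,n\}$ with $\P(S_n=0)=2^{-n}$; hence $\sum_n\P(S_n=0)<\infty$ and the induced walk on $\Z$ is transient. For the word length, $\mathrm{Disp}(c_\ell)=4\ell$ together with Lemma~\ref{lem: word metric estimates} gives $\|(c_\ell,0)\|_{\std}\ge 2\ell$, while a direct word realizing $(c_\ell,0)$ — walk once across $[-\ell,\ell]$ inserting the $2\ell$ adjacent transpositions and return — has length $O(\ell)$; thus $\|(c_\ell,0)\|_{\std}\asymp \ell\asymp|\supp{c_\ell}|$. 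Since $(2\ell+1)p_\ell=\tfrac1\ell+\tfrac1{\ell+1}$, summation gives $\E(|\supp{\sigma_1}|)=\infty$ and, for every $0<\varepsilon<1$, $\E(|\supp{\sigma_1}|^{1-\varepsilon})\asymp\sum_\ell\ell^{-1-\varepsilon}<\infty$; the same two estimates applied to $\|\cdot\|_{\std}$ show that $\mu$ has an infinite first moment but a finite $(1-\varepsilon)$-moment for all $0<\varepsilon<1$. In particular the hypothesis $\E(|\supp{\sigma_1}|)<\infty$ of Lemma~\ref{lem. stabilization key lemma (Borel-Cantelli)} fails, as it must for stabilization to break down.

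It remains to show that the permutation coordinate does not stabilize, and I would test this at $h=0$. As in the proof of Lemma~\ref{lem. stabilization key lemma (Borel-Cantelli)}, the event $A_n=\{F_{n+1}(0)\neq F_n(0)\}$ equals $\{-S_n\in\supp{\sigma_{n+1}}\}$. Let $\mathcal{F}_n$ be the $\sigma$-algebra generated by the first $n$ increments; then $A_n$ is $\mathcal{F}_{n+1}$-measurable and, since $\sigma_{n+1}$ is independent of $\mathcal{F}_n$ while $S_n$ is $\mathcal{F}_n$-measurable,
\[
\P(A_n\mid \mathcal{F}_n)=\tfrac12\,\P(L\ge S_n)\ge \tfrac12\,\P(L\ge n)=\frac{1}{2n},
\]
using $0\le S_n\le n$. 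Hence $\sum_n\P(A_n\mid\mathcal{F}_n)=\infty$ surely, and Lévy's conditional version of the Borel--Cantelli lemma yields $\P(A_n\text{ infinitely often})=1$. Thus $F_n(0)$ changes infinitely often almost surely, so the permutation coordinate does not stabilize. The one delicate point is precisely this last step: the events $A_n$ are not independent — they are strongly coupled through $S_n$ — so the ordinary second Borel--Cantelli lemma is unavailable, and the deterministic lower bound $\P(A_n\mid\mathcal{F}_n)\ge\tfrac1{2n}$ is exactly what the conditional version needs.
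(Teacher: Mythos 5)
Your proof is correct, and its skeleton --- a mixture of a drifted step on $\Z$ with heavy-tailed cyclic permutations whose support size has a $1/\ell$ tail, moment checks via the displacement bound of Lemma \ref{lem: word metric estimates}, and a Borel--Cantelli argument at $h=0$ --- is the same as the paper's. The genuine difference is in the final probabilistic step. The paper's measure uses one-sided cycles $r_k$ supported on $\{0,\dots,k-1\}$ and a walk drifting to $-\infty$ with increments in $\{0,\pm 1\}$; it then defines $A_n$ as the event that the single increment $\sigma_n$ moves \emph{every} point of $[0,n)$. These events depend on disjoint increments, hence are independent, with $\P(A_n)\ge \tfrac{1}{2(n+1)}$, so the classical second Borel--Cantelli lemma applies; the conclusion follows because $-S_{n-1}$ almost surely lies in $[0,n)$ for all large $n$ (negative drift plus $S_{n-1}\ge -(n-1)$), so each late occurrence of $A_n$ forces $F_n(0)\neq F_{n-1}(0)$. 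You instead take $A_n$ to be the actual change event $\{-S_n\in\supp{\sigma_{n+1}}\}$, which is not independent across $n$, and you compensate with L\'evy's conditional Borel--Cantelli lemma and the deterministic bound $\P(A_n\mid\mathcal{F}_n)\ge \tfrac{1}{2n}$, valid since $0\le S_n\le n$. Both arguments are sound: the paper's device buys elementarity (only the independent-events Borel--Cantelli is needed) at the price of enlarging the events so that the permutation must move the whole interval of reachable positions --- which still works precisely because the tail $1/n$ is non-summable --- while your route requires the martingale version of Borel--Cantelli but is more flexible, since it only asks the increment's support to contain the current position rather than every reachable one. One small remark: your closing claim that the ordinary second Borel--Cantelli lemma is ``unavailable'' is true for your choice of events, but the paper shows it can be recovered by redefining them to depend on a single increment.
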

\begin{proof}	
	For each $n\ge 1$, denote by $r_n:\Z\to \Z$ the permutation
	\begin{equation*}
		r_n(x)=\left\{
		\begin{aligned}
			x+1,& \text{ if }0\le x< n-1,\\
			0,& \text{ if }x=n-1, \text{ and}\\
			x,& \text{ otherwise.}	
		\end{aligned}\right.
	\end{equation*}
	We define the measure $\mu$ on $\extsym{\Z}$ as follows. Let \begin{equation*}\mu((\id,1))=1/8, \ \mu((\id,-1))=3/8,
	\end{equation*} 
	and
	\begin{equation*}
		\mu((r_n,0))=\frac{1}{2n(n+1)}, \ \text{ for }n\ge 1.
	\end{equation*}
	Note that $\sum_{n\ge 1}\frac{1}{n(n+1)}=1$, so that $\mu$ is indeed a probability measure. 
	Also note that $|\supp{r_n}|=n$. From this, the fact that the harmonic series $\sum_{n\ge 1}\frac{1}{n}$ diverges implies that $\E(|\supp{\sigma_1}|)$ is infinite.  Moreover, since $\|(r_n,0)\|_{\std}\ge |\supp{r_n}|$, we also have that $\mu$ has an infinite first moment. On the other hand, for every $\varepsilon>0$ the series $\sum_{n\ge 1} \frac{n^{1-\varepsilon}}{n(n+1)}$ is convergent and thus $\E(|\supp{\sigma_1}|^{1-\varepsilon})$ is finite. The element $r_n$ has word length at most $3n$ (since it can be expressed as the product of at most $n$ transpositions together with $2n$ movements in the $\Z$ coordinate), and hence
	\begin{equation*}
		\sum_{n\ge 1} \frac{\|(r_n,0)\|_{\std}^{1-\varepsilon}}{n(n+1)}\le 	3^{1-\varepsilon}\sum_{n\ge 1} \frac{n^{1-\varepsilon}}{n(n+1)},
	\end{equation*}
	which is finite. Hence, $\mu$ has a finite $(1-\varepsilon)$-moment.
	
	Let us show that the value $F_n(0)$, $n\ge 1$, almost surely changes infinitely often. By definition of the group operation and the $\mu$-random walk, we can write $F_n=F_{n-1}\circ (S_n\cdot \sigma_n)$. Hence, $F_n(0)\neq F_{n-1}(0)$ if and only if $S_n\cdot \sigma_n (0)\neq 0$, which can be rewritten as $\sigma_n(-S_n)\neq -S_n$, by using the definition of the action of $\Z$ on $\Sym{\Z}$ (here we use an additive notation for the group operation on $\Z$).
	
	The induced random walk on $\Z$ is drifted to the negative numbers, and hence almost surely $S_n\xrightarrow[n\to \infty]{}-\infty$. Also, at time $n$ the projection to $\Z$ satisfies $S_n\ge -n$, since the distribution of the increments of the induced random walk on $\Z$ is supported on $\{1,-1\}$. 
	Consider the event $A_n=\{\sigma_n(i)\neq i \text{ for }0\le i<n\}$, and note that
	\begin{equation*}
		\P(A_n)\ge \sum_{k>n}\P(\sigma_n=r_k)=\sum_{k>n}\mu((r_k,0))=\sum_{k>n}\frac{1}{2k(k+1)}=\frac{1}{2(n+1)}.
	\end{equation*}
	
	Since the sequence of events $\{A_n\}_{n\ge 1}$ is independent and the series $\sum_{n} \P(A_n)$ diverges, the Borel-Cantelli Lemma implies that almost surely infinitely many of these events occur. In consequence, the value of the permutation coordinate at $0$ changes infinitely often.
\end{proof}
In Kaimanovich's example mentioned above, the difference between the states of two adjacent lamps does stabilize \cite{Kaimanovich1983}, so that the Poisson boundary is non-trivial. A similar example is described by Lyons and Peres after the proof of Theorem 5.1 in \cite{LyonsPeres2021}. Examples of random walks on $\Z/2\Z\wr \Z^d$, $d\ge 1$, which have a non-trivial Poisson boundary but for which there is no functional defined by a finite set that stabilizes along infinite trajectories are given in \cite[Section 6]{Erschler2011}.
\section{Proof of the main theorem}\label{section: proof of the main theorem}
Let us consider a finitely generated group $H$. Let $\mu$ be a probability measure on $\extsym{H}$ with a finite first moment and a transient projection to $H$. In this case, Lemma \ref{lem: main stabilization lemma} implies that the permutation coordinate $(F_n)_n$ of the $\mu$-random walk stabilizes to a limit injective function $F_{\infty}:H\to H$. As a result, the space $\mathcal{F}(H)\coloneqq \{f:H\to H\mid f\text{ is injective}\}$ has the structure of a measure space $(\mathcal{F}(H),\lambda)$, where $\lambda$ is the \emph{hitting measure} and satisfies for any $A\subseteq \mathcal{F}(H)$ measurable, $\lambda(A)\coloneqq \P(F_{\infty}\in A).$ Alternatively, $\lambda$ is the push-forward of $\P$ through the map $\extsym{H}^{\infty}\to \mathcal{F}(H)$ that associates with every sample path of the $\mu$-random walk on $\extsym{H}$ the associated limit function $F_{\infty}$ of the permutation coordinate. Note that this map is shift-invariant, which implies that the measure $\lambda$ is $\mu$-stationary. 

The space $(\mathcal{F}(H),\lambda)$ is thus a $\mu$-boundary, as described in Subsection \ref{subsection: the Poisson boundary}. That is, it is a quotient of the Poisson boundary. In this section, we prove Theorem \ref{thm. main theorem}, which states that for $H=\Z$, the $\mu$-boundary $(\mathcal{F}(\Z),\lambda)$ actually coincides with the Poisson boundary of the random walk $(\extsym{\Z},\mu)$.

The proof of Theorem \ref{thm. main theorem} uses Kaimanovich's Conditional Entropy Criterion (Theorem \ref{thm: entropy criterion formulation}). The main idea is that conditioned on the limit function to which the permutation coordinate converges, for every $\varepsilon>0$ and any large enough $n$ we can find a finite subset $Q_n\subseteq \extsym{\Z}$ with $|Q_n|<\exp(\varepsilon n)$, and such that $(F_n,S_n)\in Q_n$ with some fixed positive probability.

The fact that $\mu$ has a finite first moment implies that the projection $\mu_{\Z}$ of $\mu$ to $\Z$ also does. Since $\mu_{\Z}$ induces a transient random walk, it holds that the $\mu_{\Z}$-random walk on $\Z$ has non-zero drift $\sum_{x\in \Z}x\mu_{\Z}(x)$. The law of large numbers then allows us to confine the position coordinate $S_n$ within an interval $I_n$ of length $2\varepsilon n$ and to estimate the values of the permutation coordinate outside this interval. However, this is not enough for our purposes, since a rough estimate for the number of values that the permutation coordinate can take inside $I_n$ is $(2\varepsilon n)!$, which leads to sets $Q_n$ that have an exponential size. To overcome this difficulty, we look at the possible values for the displacement of $F_n$. Recall that the displacement of $\sigma\in \extsym{\Z}$ is defined as $\mathrm{Disp}(\sigma)=\sum_{i\in \Z}|\sigma(i)-i|$ (Definition \ref{defn: displacement}). The first moment hypothesis gives us control over the possible values for the displacement associated with the permutation increments that modify the values in the interval $I_n$, which in turn reduces the previously mentioned estimate into a subexponential one.

\subsection{The proof}\label{subsection: the proof}
We first state two preliminary lemmas that follow from the hypothesis of $\mu$ having a finite first moment and the Strong Law of Large Numbers \cite[Section VIII.4]{Feller1968}. Then, we proceed with the proof.

\begin{lem}\label{lem: increments are not that big}
	Consider a probability measure $\mu$ on $\extsym{\Z}$ with a finite first moment. Then there exists a constant $D>0$ such that for every $\varepsilon>0$ and almost every sequence of i.i.d.\ increments $\{(\sigma_k,X_k)\}_{k\ge 1}$ there exists $N\ge 1$ such that for every $n\ge N$ one has
	\begin{enumerate}
		\item $|X_n|\le \varepsilon n$,
		\item $\sigma_n\in \mathrm{Sym}\left(\left[-\varepsilon n,\varepsilon n\right]\right)$, and
		
		\item $\displaystyle \sum_{k=n-\varepsilon n}^n \mathrm{Disp}(\sigma_k)<D \varepsilon n$.
		
	\end{enumerate}
\end{lem}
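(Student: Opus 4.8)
The plan is to prove each of the three conclusions by a separate application of the Strong Law of Large Numbers (SLLN) together with the finite first moment hypothesis, and then intersect the three almost-sure events. Let me set up notation: write $\{(\sigma_k,X_k)\}_{k\ge 1}$ for the i.i.d.\ increments distributed according to $\mu$, and recall that the finite first moment of $\mu$ means $\E\left(\|(\sigma_1,X_1)\|_{\std}\right)<\infty$. By Lemma \ref{lem: word metric estimates}, this single hypothesis simultaneously controls $\E(|X_1|)$ (since $\|(\sigma,x)\|_{\std}\ge \|x\|_H=|x|$) and $\E(\mathrm{Disp}(\sigma_1))$ (since $\|(\sigma,x)\|_{\std}\ge \tfrac{1}{2}\mathrm{Disp}(\sigma)$, so $\mathrm{Disp}(\sigma_1)\le 2\|(\sigma_1,X_1)\|_{\std}$ is integrable). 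This is the structural observation that makes everything follow from one moment assumption.

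For part (1), I would argue that an integrable i.i.d.\ sequence of real random variables satisfies $|X_n|/n\to 0$ almost surely. This is the standard consequence of the SLLN (or directly of the Borel--Cantelli lemma applied to $\P(|X_n|>\varepsilon n)$, whose sum is finite precisely because $\E|X_1|<\infty$). Hence for each fixed $\varepsilon>0$, almost surely $|X_n|\le \varepsilon n$ for all large $n$. For part (2), note that $\sigma_n\in \mathrm{Sym}([-\varepsilon n,\varepsilon n])$ is equivalent to $\supp{\sigma_n}\subseteq [-\varepsilon n,\varepsilon n]$, and since $|i|\le \mathrm{Disp}(\sigma_n)$ for any $i\in\supp{\sigma_n}$ that is moved a positive distance—more precisely $\max_{i}|i|\mathds{1}_{\{i\in\supp{\sigma_n}\}}\le \mathrm{Disp}(\sigma_n)$ plus the observation that a nontrivially permuted point lies within $\mathrm{Disp}(\sigma_n)$ of its (bounded) orbit—the same $o(n)$ bound on $\mathrm{Disp}(\sigma_n)$ forces the support into the required interval. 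Concretely, since $\mathrm{Disp}(\sigma_n)$ is an integrable i.i.d.\ sequence, $\mathrm{Disp}(\sigma_n)/n\to 0$ almost surely by the same Borel--Cantelli argument, and this bounds how far from the origin $\sigma_n$ can move any point.

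For part (3), the quantity $\sum_{k=n-\varepsilon n}^{n}\mathrm{Disp}(\sigma_k)$ is a sum over a window of roughly $\varepsilon n$ consecutive terms of the integrable i.i.d.\ sequence $\mathrm{Disp}(\sigma_k)$. The SLLN gives $\tfrac{1}{m}\sum_{k=1}^m \mathrm{Disp}(\sigma_k)\to \E(\mathrm{Disp}(\sigma_1))=:D_0$ almost surely, so writing the windowed sum as a difference of two partial sums, $S_n-S_{n-\varepsilon n}$ where $S_m=\sum_{k\le m}\mathrm{Disp}(\sigma_k)$, one gets $S_n-S_{n-\varepsilon n}\approx D_0(n-(n-\varepsilon n))=D_0\varepsilon n$ for large $n$. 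Choosing $D$ to be any constant strictly larger than $D_0$ (say $D=2D_0+1$, uniform in $\varepsilon$) yields the claimed bound $\sum \mathrm{Disp}(\sigma_k)<D\varepsilon n$ for all large $n$. Finally, since each of the three events holds almost surely for the given $\varepsilon$, their intersection does too, and on this intersection all three conclusions hold simultaneously for $n\ge N$.

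The only subtle point—and the one I would be most careful about—is the uniformity of the constant $D$ in part (3): the statement asserts a single $D>0$ that works for every $\varepsilon>0$, whereas the threshold $N$ is allowed to depend on $\varepsilon$. The SLLN delivers $D_0=\E(\mathrm{Disp}(\sigma_1))$ as the limiting average rate regardless of $\varepsilon$, so taking $D$ slightly above $D_0$ handles all $\varepsilon$ at once; the windowing truncation error is absorbed into the gap between $D$ and $D_0$ for $n$ large (with $N$ depending on $\varepsilon$). I expect the main obstacle to be bookkeeping the windowed-sum estimate so that the error terms from both endpoints of the partial-sum difference are genuinely $o(\varepsilon n)$ uniformly enough to keep $D$ independent of $\varepsilon$; everything else is a routine invocation of the SLLN and Lemma \ref{lem: word metric estimates}.
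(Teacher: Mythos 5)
Your parts (1) and (3) follow the paper's own route: part (1) is the Borel--Cantelli/$o(n)$ argument for integrable i.i.d.\ variables, and part (3) is the SLLN applied to the integrable i.i.d.\ sequence $\mathrm{Disp}(\sigma_k)$ (integrable by Lemma \ref{lem: word metric estimates}), with the windowed sum written as a difference of partial sums; your attention to keeping $D$ uniform in $\varepsilon$ is correct and is exactly what makes the paper's terse ``follows from the SLLN'' legitimate.

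However, your argument for part (2) has a genuine gap: it is \emph{not} true that $\max\{|i| : i \in \supp{\sigma}\}\le \mathrm{Disp}(\sigma)$, and no bound on the displacement can control \emph{where} the support sits. The displacement is blind to location: the transposition $\delta^{N+1}_{N}$ swapping $N$ and $N+1$ has $\mathrm{Disp}(\delta^{N+1}_N)=2$ for every $N$, while its support reaches distance $N+1$ from the origin. A measure charging such far-away transpositions can have $\mathrm{Disp}(\sigma_n)$ uniformly bounded while $\supp{\sigma_n}$ escapes any interval $[-\varepsilon n,\varepsilon n]$ infinitely often, so the $o(n)$ bound on $\mathrm{Disp}(\sigma_n)$ that you invoke proves nothing about part (2). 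The quantity that does control the support is the word norm: since multiplying by a generator $\delta_s\in\std$ alters the permutation only at the walker's current position, any word of length $\ell$ in $\std$ representing $(\sigma,x)$ has $\supp{\sigma}$ contained in the set of positions visited along that word, whence $\max\{|y|:y\in\supp{\sigma}\}\le \|(\sigma,x)\|_{\std}$. This is what the paper uses: the event $\{\supp{\sigma_n}\nsubseteq[-\varepsilon n,\varepsilon n]\}$ is contained in $\{\|(\sigma_n,X_n)\|_{\std}\ge \varepsilon n\}$, and these probabilities are summable by the finite first moment hypothesis, so Borel--Cantelli finishes part (2) exactly as in part (1). With this replacement (word norm in place of displacement) your proof goes through.
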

\begin{proof}
	We will use the Borel-Cantelli Lemma for the first two items. We see that
	\begin{align*}
		\sum_{n\ge 1}\P\left(|X_n|\ge \varepsilon n\right)&=\sum_{n\ge 1}\sum_{(F,x)\in \extsym{\Z}}\mathds{1}_{\left\{|x|\ge \varepsilon n\right\} }\mu(F,x)\\
		&=\sum_{(F,x)\in \extsym{\Z}}\frac{1}{\varepsilon}|x|\mu(F,x)\\
		&\le\frac{1}{\varepsilon}\sum_{(F,x)\in \extsym{\Z}}\|(F,x)\|_{\std}\mu(F,x)<+\infty.
	\end{align*}
	
	Similarly, we have
	
	\begin{align*}
		\sum_{n\ge 1}\P\left(\supp{\sigma_n}\nsubseteq \left[- \varepsilon n, \varepsilon n\right]\right)&=\sum_{n\ge 1}\sum_{(F,x)\in \extsym{\Z}}\mathds{1}_{\left\{\supp{F}\nsubseteq \left[- \varepsilon n,\varepsilon n\right]\right\} }\mu(F,x)\\
		&\le \sum_{n\ge 1}\sum_{(F,x)\in \extsym{\Z}}\mathds{1}_{\left\{ n\le\frac{1}{\varepsilon} \max\{|y|:\  y\in \supp{F}\} \right\} }\mu(F,x)\\
		&\le \frac{1}{\varepsilon}\sum_{(F,x)\in \extsym{\Z}}\|(F,x)\|_{\std}\mu(F,x)<+\infty,
	\end{align*}
	
	Since both sums are finite, the Borel-Cantelli Lemma implies that almost surely these events occur finitely many times, and thus neither of them happens for $n$ sufficiently large.
	
	The third item follows from the Strong Law of Large Numbers, since the sequence of random variables $\{\mathrm{Disp}(\sigma_k)\}_{k\ge 1}$ are i.i.d. of finite first moment. Indeed, using Lemma \ref{lem: word metric estimates}, we have
	$$
	\E(\mathrm{Disp}(\sigma_1))\le 2\E(\|(\sigma_1,X_1)\|_{\std})<+\infty.
	$$
\end{proof}

The next lemma is a direct consequence of the Strong Law of Large Numbers.

\begin{lem}\label{lem: asymptotics Z} Fix a probability measure $\nu$ on $\Z$ with finite first moment and positive drift. Denote by $\{S_n\}_{n\ge 0}$ the associated $\nu$-random walk on $\Z$. Then there exists a constant $C_1>0$ such that
	for every $\varepsilon>0$, almost surely there exists $N\ge 1$ such that for $n\ge N$ we have $$C_1n-\varepsilon n \le S_n\le C_1 n +\varepsilon n.$$	
\end{lem}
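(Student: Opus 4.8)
The plan is to recognize this statement as a direct reformulation of the Strong Law of Large Numbers. Write $S_n = X_1 + \cdots + X_n$, where $\{X_k\}_{k\ge 1}$ are i.i.d.\ with common distribution $\nu$, and set $C_1\coloneqq \sum_{x\in \Z} x\,\nu(x) = \E(X_1)$. By hypothesis $\nu$ has a finite first moment, so $\E|X_1|<\infty$ and the quantity $C_1$ is well-defined; the positive-drift assumption is precisely the statement that $C_1>0$, and this is the constant we seek.

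Since the increments are i.i.d.\ and integrable, the Strong Law of Large Numbers \cite[Section VIII.4]{Feller1968} applies and gives
\begin{equation*}
	\frac{S_n}{n}\xrightarrow[n\to \infty]{} C_1 \qquad \text{almost surely.}
\end{equation*}

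Finally, I would translate this convergence into the asserted two-sided bound. Fix $\varepsilon>0$. By the definition of almost sure convergence, on an event of full measure there exists $N\ge 1$ such that $\left|S_n/n - C_1\right|<\varepsilon$ for all $n\ge N$; multiplying through by $n$ yields $C_1 n - \varepsilon n \le S_n \le C_1 n + \varepsilon n$ for every $n\ge N$, as claimed. There is no genuine obstacle here: the entire content of the lemma is the identification of the drift with the expectation $\E(X_1)$, together with the observation that the positivity hypothesis guarantees $C_1>0$. In fact one obtains the slightly stronger statement in which the null set is chosen uniformly in $\varepsilon$, since the single convergence event $\{S_n/n\to C_1\}$ already encodes the conclusion for all $\varepsilon>0$ simultaneously.
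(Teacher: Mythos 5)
Your proof is correct and matches the paper's approach exactly: the paper itself notes that this lemma is ``a direct consequence of the Strong Law of Large Numbers,'' which is precisely the argument you give, with $C_1=\E(X_1)$ positive by the drift hypothesis. Your additional remark that the null set can be chosen uniformly in $\varepsilon$ is a correct (and harmless) strengthening.
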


Let us now proceed with the proof of the main theorem.

\begin{proof}[Proof of Theorem \ref{thm. main theorem}]
	Since the projection of $\mu$ to $\Z$ is transient and has a finite first moment, its drift $\E(X_1)$ is non-zero, and we lose no generality if we furthermore suppose that it is positive. We will assume this throughout the rest of the proof.
	
	Denote by $F_{\infty}$ the random variable defined as the limit of the permutation component $\{F_n\}_{n\ge 0}$ of the $\mu$-random walk on $\extsym{\Z}$, which is almost surely well-defined thanks to Lemma \ref{lem: main stabilization lemma}. To prove the theorem, it suffices to check the hypotheses of Theorem \ref{thm: entropy criterion formulation} for the boundary of limit functions $F_{\infty}$.

	Fix an arbitrary $\varepsilon>0$. Thanks to Lemmas \ref{lem: increments are not that big} and \ref{lem: asymptotics Z}, there exist constants $C_1,D>0$ and $N\ge 1$ large enough so that for every $n\ge N$ we have that 
	$$C_1 n -\varepsilon n \le S_n \le C_1 n +\varepsilon n,$$ and that the increments of the $\mu$-random walk satisfy $$|X_n|<\varepsilon n,\text{ and } \sigma_n\in \Sym{[-\varepsilon n, \varepsilon n]},$$ together with
	$$\displaystyle \sum_{k=n-\tilde{\varepsilon} n}^n \mathrm{Disp}(\sigma_k)<D\tilde{\varepsilon} n,$$ where $\tilde{\varepsilon}=\frac{4\varepsilon}{C_1+2\varepsilon}$. This choice of $\widetilde{\varepsilon}$ is to simplify the computations below.
	
	The above guarantees that $F_n(y)=y$ for every $y>(C_1+2\varepsilon) n$. Indeed, the maximum value that the projection to $\Z$ could have visited is $C_1+\varepsilon n$, and since the permutation component of the increments has a range of at most $\varepsilon n$, there have been no modifications of the permutation component of the random walk beyond $C_1+\varepsilon n +\varepsilon n=(C_1+2\varepsilon)n.$
	
	Similarly, for every instant after $n$ we know that the projection to $\Z$ will not visit any value smaller than $(C_1-\varepsilon)n$, so that for every $y<(C_1-2\varepsilon)n$ the value $F_n(y)$ is already fixed to its limit $F_{\infty}(y)$. Thus, we know the exact value of $F_n(y)$ for every $y\in \Z$ such that $|y-C_1n|>2\varepsilon n$.
	
	We now estimate the possible number of values that $F_n$ can take in the interval $[(C_1-2\varepsilon)n, (C_1+2\varepsilon)n]$. To do so, we remark that if $k_0$ is the first moment that the permutation component of the random walk is not trivial on this interval, then
	\begin{equation}\label{eq: displacement at time n is at most the sum of displacements of each increment}
		\displaystyle \sum_{i=(C_1-2\varepsilon)n}^{(C_1+2\varepsilon)n} |F_n(i)-i|\le \sum_{k=k_0}^{n}\mathrm{Disp}(\sigma_k).
	\end{equation}
	
	For $n$ large enough, the smallest possible value for $k_0$ is $\frac{C_1-2\varepsilon}{C_1+2\varepsilon}n$. Indeed, the maximum value of $S_k$ is $C_1k+\varepsilon k$ and the support of the permutation component increments allows for an extra $\varepsilon k$, so that we get the inequality 
	\begin{equation*}(C_1+2\varepsilon)k_0\ge (C_1-2\varepsilon)n.
	\end{equation*}
	
	Now note that \begin{equation*}n-\frac{C_1-2\varepsilon}{C_1+2\varepsilon}n= \frac{4\varepsilon}{C_1+2\varepsilon}n,
	\end{equation*}
	
	and recall that we defined $\tilde{\varepsilon}=\frac{4\varepsilon}{C_1+2\varepsilon}$, so that we have
	
	\begin{equation*}\sum_{k=n-\tilde{\varepsilon} n}^n \mathrm{Disp}(\sigma_k)<D\tilde{\varepsilon} n=\frac{4D}{C_1+2\varepsilon}\varepsilon n.
	\end{equation*}
	Denote $D'=\frac{4D}{C_1+2\varepsilon}$.
	Thanks to Equation \eqref{eq: displacement at time n is at most the sum of displacements of each increment}, we can interpret the above as saying that the permutation $F_n$ must assign a non-negative value $d_i\coloneqq |F_n(i)-i|$ to each element $i\in [(C_1-2\varepsilon)n, (C_1+2\varepsilon)n]$ such that $$\displaystyle\sum_{i=(C_1-2\varepsilon)n}^{(C_1+2\varepsilon)n}d_i<D'\varepsilon n.$$
	
	The number of ways to do this is the same as the number of ways of distributing $D'\varepsilon n$ identical balls into $4\varepsilon n+1$ distinguishable boxes, together with a factor of $2^{4\varepsilon n+1}$ which accounts for the fact that for the same value of $d_i$ there are at most two choices of $F_n(i)$ (depending on whether $F_n(i)\ge i$ or $F_n(i)<i$). This gives an upper bound of
	$$
	2^{4\varepsilon n+1} \cdot {(4+D')\varepsilon n \choose D'\varepsilon n }
	$$
	for the possible values of the function $F_n$.
	
	To use Kaimanovich's criterion, we define $Q_{n}\subseteq \extsym{\Z}$ to be the set of elements of the form $(F,x)$ where $(C_1-\varepsilon)n\le x\le (C_1+\varepsilon)n$ and where $F$ is a permutation as described above. The random set $Q_n$ is measurable with respect to $\sigma(F_{\infty})$. As a consequence of the above estimates, we have that $(F_n,S_n)\in Q_{n}$ almost surely, for $n$ large enough, and that 
	$$
	|Q_n|\le (4\varepsilon n+1) 2^{4\varepsilon n+1} \cdot {(4+D')\varepsilon n \choose D'\varepsilon n }.
	$$
	Finally, thanks to Stirling's approximation \cite[Section II.9]{Feller1968}, we have
	\begin{align*}
		\limsup_{n\to \infty}\frac{1}{n}\log |Q_n|&\le 4\varepsilon\log 2 + \limsup_{n\to \infty} \frac{1}{n}\log {(4+D')\varepsilon n \choose D'\varepsilon n }\\
		&\le 4\varepsilon\log 2+4\varepsilon\log\left( \frac{4+D'}{4\varepsilon} \right)+D'\varepsilon\log\left( \frac{4+D'}{D'} \right)\\
		&\le C_2 \varepsilon,
	\end{align*}
	for some constant $C_2>0$. With this, we have checked the hypotheses of Theorem \ref{thm: entropy criterion formulation} and hence finished the proof.
\end{proof}

\section{Boundary triviality for recurrent projections to the base group} \label{section: recurrent projection}

Theorem \ref{thm. main theorem} describes the Poisson boundary of random walks on $\extsym{\Z}$ with a finite first moment and a transient projection to $\Z$. On the other hand, the situation is more delicate when the projection to $\Z$ is recurrent since the group $\Sym{\Z}$ admits measures with a non-trivial Poisson boundary \cite{Kaimanovich1983}. In this section, we prove that the Poisson boundary is trivial for finitary measures on $\extsym{\Z}$ or $\extsym{\Z^2}$ that induce a recurrent random walk on the base group.

We first show the result for $H=\Z$, following similar ideas to \cite[Proposition 6.2]{KaimanovcihVershik1983}.
\begin{prop}\label{prop: recurrent proj to Z has trivial Poisson boundary} Consider $\mu$ a finitely supported probability measure on $\extsym{\Z}$ such that its projection to $\Z$ induces a recurrent random walk. Then the Poisson boundary of $(\extsym{\Z},\mu)$ is trivial.
\end{prop}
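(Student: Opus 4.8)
The plan is to apply the Entropy Criterion of Derrienic and Kaimanovich--Vershik recalled in Subsection \ref{subsection: conditional entropy}. Since $\mu$ is finitely supported we have $H(\mu)<\infty$, so it suffices to prove that the Avez entropy vanishes, $h(\mu)=0$; equivalently, that $H(\mu^{*n})/n\to 0$. Writing the random walk as $(F_n,S_n)$ with $F_n\in\Sym{\Z}$ and $S_n\in\Z$, the quantity $H(\mu^{*n})$ is exactly $H(F_n,S_n)$, and I would bound it by $H(S_n)+H(F_n)$. The factor $H(S_n)$ is harmless: $S_n$ is a finitely supported random walk on $\Z$, so it takes values in $[-n,n]$ and $H(S_n)=O(\log n)=o(n)$ (and in any case $\Z$ is Liouville). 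The whole difficulty is therefore to show that $H(F_n)=o(n)$.

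The key geometric observation is that the finite support of $\mu$ confines the support of $F_n$ to the range of the projected walk. Writing $(\sigma_k,X_k)$, $k\ge 1$, for the i.i.d.\ increments, let $R\ge 1$ be such that every $(f,x)\in\supp{\mu}$ satisfies $|x|\le R$ and $\supp{f}\subseteq[-R,R]$. Since $F_n=F_{n-1}\circ(S_{n-1}\cdot\sigma_n)$ and $\supp{S_{n-1}\cdot\sigma_n}=S_{n-1}+\supp{\sigma_n}$, an immediate induction gives $\supp{F_n}\subseteq[\,m_n-R,\,M_n+R\,]$, where $m_n=\min_{0\le k\le n}S_k$ and $M_n=\max_{0\le k\le n}S_k$. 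In particular, conditionally on the pair $(m_n,M_n)$ the permutation $F_n$ is supported on an interval of length $R_n+2R$, where $R_n\coloneqq M_n-m_n$, and there are at most $(R_n+2R+1)!$ such permutations. Decomposing the entropy as $H(F_n)\le H(m_n,M_n)+H\big(F_n\mid (m_n,M_n)\big)$ and using $H(m_n,M_n)=O(\log n)$ together with the factorial count, I obtain
\[
H(F_n)\le O(\log n)+\E\big[\log\big((R_n+2R+1)!\big)\big]\le O(\log n)+\E\big[(R_n+2R+1)\log(R_n+2R+1)\big].
\]

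It remains to control $\E[R_n\log R_n]$, and this is where recurrence enters. A finitely supported measure on $\Z$ induces a recurrent walk precisely when its drift is zero, and a mean-zero finitely supported (hence finite-variance) walk satisfies, by the central limit theorem together with a maximal inequality, $\E[R_n]=O(\sqrt n)$ with Gaussian-type tails. This yields $\E[R_n\log R_n]=O(\sqrt n\log n)=o(n)$, and therefore $H(F_n)=o(n)$. Combining with $H(S_n)=o(n)$ gives $H(\mu^{*n})/n\to 0$, so $h(\mu)=0$ and the Poisson boundary is trivial by the Entropy Criterion. I expect the main obstacle to be the middle step: turning the support constraint on $F_n$ into a subexponential entropy bound, which hinges both on the factorial counting of permutations on the range-interval and on the sublinear (in fact $O(\sqrt n)$) growth of the range $R_n$ forced by recurrence.
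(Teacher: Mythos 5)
Your argument is correct, and it reaches the conclusion by a genuinely different route from the paper's. You verify the plain entropy criterion recalled in Subsection \ref{subsection: conditional entropy} (finite $H(\mu)$ plus $h(\mu)=0$ forces a trivial boundary), via the decomposition $H(\mu^{*n})\le H(S_n)+H(m_n,M_n)+H\big(F_n\mid (m_n,M_n)\big)$, the confinement $\supp{F_n}\subseteq[m_n-R,M_n+R]$, the factorial count on that interval, and the estimate $\E[R_n]=O(\sqrt n)$, which follows from zero drift (forced by recurrence) and finite variance; note the CLT is not really needed here, since Kolmogorov's or Doob's maximal inequality already gives $\E\big[\max_{k\le n}|S_k|\big]=O(\sqrt n)$. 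The paper instead applies Kaimanovich's conditional entropy criterion (Theorem \ref{thm: entropy criterion formulation}) to the trivial boundary: Kolmogorov's maximal inequality at the cruder scale $n^{3/4}$ shows that $(F_n,S_n)$ lies, with probability at least $1-\sigma^2/n^{1/2}>1/2$ (where $\sigma^2$ is the variance of the projected step distribution), in the \emph{deterministic} set $A_n$ of elements $(f,x)$ with $|x|\le n^{3/4}$ and $\supp{f}\subseteq[-n^{3/4}-M,n^{3/4}+M]$, whose cardinality $(2n^{3/4}+1)\cdot(2n^{3/4}+1+M)!$ is subexponential. The underlying geometry is identical in both proofs — the permutation coordinate is supported in a bounded neighborhood of the range of the projected walk, and factorials of sublinear quantities are subexponential — but the wrappers differ: your expectation-based bound genuinely needs the range to be sublinear by a logarithmic margin (the bound $\E[R_n]=o(n)$ alone would not absorb the $\log$ factor coming from the factorial, whereas $O(\sqrt n\log n)=o(n)$ does), while the paper's capture argument only needs confinement at \emph{some} sublinear scale with probability bounded away from zero, and it recycles the same criterion (Theorem \ref{thm: entropy criterion formulation}) that drives the proof of the main theorem and of the $\Z^2$ case, keeping the toolkit uniform across the paper.
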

\begin{proof}

	Denote by $\nu$ the projection of $\mu$ to $\Z$, which has a zero mean and is finitely supported. Consider the $\mu$-random walk $\{(F_n,S_n)\}_{n\ge 0}$ on $\extsym{\Z}$.
	We first recall that Kolmogorov's Maximal Inequality \cite[Section IX.7]{Feller1968} states that for any $\lambda>0$, we have
	
	\begin{equation*}\P\left( \max_{1\le k\le n} |S_k|\ge \lambda \right)\le n\sigma^2/\lambda^2,
	\end{equation*}
	where $\sigma^2$ is the variance of $\nu$.
	In particular for $\lambda=n^{3/4}$, we get
	
	$$
	\P\left( \max_{1\le k\le n} |S_k|< n^{3/4} \right)\ge 1- \frac{\sigma^2}{n^{1/2}}.
	$$
	
	Thus, with probability at least $1-\sigma^2/n^{1/2}$, the random walk at time $n$ belongs to the set
	$$
	A_n=\left \{ (f,x)\in G\mid |x|\le n^{3/4}, \ \supp{f}\subseteq [-n^{3/4}-M,n^{3/4}+M] \right \},
	$$
	where $M=\max\left \{C\ge 0\mid \text{for every }(f,x)\in \supp{\mu}\text{ and }|y|>C, \ f(y)=y\right \}$ is the size of the largest support of functions $f$ that participate in the support of $\mu$.
	
	The size of the set $A_n$ is subexponential. Indeed, we have
	\begin{align*}
		\frac{1}{n}\log |A_n|\le \frac{1}{n}\log \left( (2n^{3/4}+1)\cdot (2n^{3/4}+1+M)!\right),
	\end{align*}
	which converges to $0$. This follows by applying Stirling's approximation to bound from above the term $\log((2n^{3/4}+1+M)!)$. We can thus apply Theorem \ref{thm: entropy criterion formulation} with the trivial boundary since the sets $A_n$ are deterministic and we have
	$$\P\left((F_n,S_n) \in A_n\right)\ge \P\left(  \max_{1\le k\le n} |S_k|< n^{3/4}  \right)\ge 1-\frac{\sigma^2}{n^{1/2}} >1/2,$$
	for any $n$ large enough.
\end{proof}

In order to prove the analogous result for $H=\Z^2$, we need more detailed calculations. Indeed, in the proof of Proposition \ref{prop: recurrent proj to Z has trivial Poisson boundary} we argued that the projection of the random walk to $\Z$ at time $n$ stayed inside an interval of length of order $n^{3/4}$ with some positive probability. Then a rough estimate for the total number of permutations in this interval gives rise to a subset of $\extsym{\Z}$ whose size is of order $(n^{3/4})!$. This grows subexponentially and hence we can apply Theorem \ref{thm: entropy criterion formulation}. In the case of $\Z^2$, we can similarly argue that with a fixed positive probability, the projection of the random walk to $\Z^2$ at time $n$ has not visited the outside of a ball of radius $C_1 n^{1/2}$, for some constant $C_1>0$. However, since the base group now has quadratic growth, a rough estimate for the number of permutations gives an exponential number of possibilities, and so we cannot apply Theorem \ref{thm: entropy criterion formulation}. We go past these complications by giving more detailed estimates for the range of the projected random walk to $\Z^2$, which is $O(n/\log(n))$, and by looking at the displacement of the permutation coordinate increments.

\begin{prop}\label{prop: recurrent proj to Z2 has trivial Poisson boundary} Consider $\mu$ a finitely supported probability measure on $\extsym{\Z^2}$ such that its projection to $\Z^2$ induces a recurrent random walk. Then the Poisson boundary of $(\extsym{\Z^2},\mu)$ is trivial.
\end{prop}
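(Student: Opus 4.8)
The plan is to verify Kaimanovich's Conditional Entropy Criterion (Theorem \ref{thm: entropy criterion formulation}) for the \emph{trivial} boundary, exhibiting for each $n$ a \emph{deterministic} finite set $Q_n\subseteq\extsym{\Z^2}$ (so that $Q_{n,\varepsilon}:=Q_n$ automatically satisfies item \eqref{item: strip criterion 1} for every $\varepsilon>0$) with $\tfrac1n\log|Q_n|\to 0$ and $\limsup_n\P\!\left((F_n,S_n)\in Q_n\right)>0$; this forces the Poisson boundary to coincide with the trivial one. Write $\nu=\pi_*\mu$ for the recurrent, mean-zero, finitely supported projection to $\Z^2$, let $M$ bound the radius of the supports of the permutation parts of the elements of $\supp\mu$, and let $D$ bound their displacements. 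With respect to the standard generators the word metric on $\Z^2$ is the $\ell^1$ metric, so $\mathrm{Disp}(F)=\sum_{h}\|F(h)-h\|_1$. The set $Q_n$ will impose three constraints on $(F,x)$, each coming from a high-probability feature of the trajectory; the key point is that no single one suffices, but together they cut the count down to $\exp(o(n))$.

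The three estimates are as follows. First, displacement is controlled deterministically: writing $F_n=(S_0\cdot\sigma_1)\circ\cdots\circ(S_{n-1}\cdot\sigma_n)$ and using that translation is an isometry together with the subadditivity of $\mathrm{Disp}$ under composition, one gets $\mathrm{Disp}(F_n)\le\sum_{k=1}^n\mathrm{Disp}(\sigma_k)\le Dn$ for every trajectory. Second, Kolmogorov's maximal inequality applied to $|S_k|^2$ gives a constant $C_1$ with $\P(\max_{k\le n}|S_k|\le C_1\sqrt n)\ge 3/4$; on this event $\supp{F_n}\subseteq B:=B(0,C_1\sqrt n+M)$, a deterministic ball with $|B|=\Theta(n)$. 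Third, the classical estimate $\E(R_n)=O(n/\log n)$ for the expected size of the range $R_n$ of a recurrent planar walk (the Dvoretzky--Erd\H{o}s asymptotics in the genuinely two-dimensional case, and $O(\sqrt n)$ in the degenerate one-dimensional case) together with Markov's inequality yields a constant $C_2$ with $\P(R_n\le C_2 n/\log n)\ge 3/4$; on this event $|\supp{F_n}|\le s$, where $s:=\kappa\,C_2\,n/\log n$ absorbs the $M$-neighborhood. Setting $Q_n:=\{(F,x): x\in B,\ \supp F\subseteq B,\ |\supp F|\le s,\ \mathrm{Disp}(F)\le Dn\}$, the three events combine to give $\P((F_n,S_n)\in Q_n)\ge 1/2$, which is item \eqref{item: strip criterion 3}.

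It then remains to bound $|Q_n|$, as a product of three factors. The choice of $x\in B$ costs $|B|=\Theta(n)$, negligible logarithmically. The support $T\subseteq B$ with $|T|\le s$ is chosen in at most $(s{+}1)\binom{|B|}{s}\le (s{+}1)\bigl(e|B|/s\bigr)^s$ ways; since $|B|/s=\Theta(\log n)$ this equals $\exp\!\bigl(O(\tfrac{n\log\log n}{\log n})\bigr)=\exp(o(n))$, and here the confinement to a $\Theta(n)$-site ball is essential. Finally, given $T$, the number of maps $F\colon T\to\Z^2$ (in particular all permutations of $T$) with $\sum_{h\in T}\|F(h)-h\|_1\le Dn$ is at most, for any $x\in(0,1)$,
\begin{equation*}
 x^{-Dn}\Bigl(\sum_{w\in\Z^2}x^{\|w\|_1}\Bigr)^{|T|}=x^{-Dn}\Bigl(\frac{1+x}{1-x}\Bigr)^{2|T|},
\end{equation*}
using the Chernoff-type bound for the linear constraint and $\sum_{w\in\Z^2}x^{\|w\|_1}=(\tfrac{1+x}{1-x})^2$. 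Taking $x=1-1/\log n$ makes $x^{-Dn}=\exp(O(n/\log n))$ and $(\tfrac{1+x}{1-x})^{2|T|}=\exp(O(\tfrac{n\log\log n}{\log n}))$, both $\exp(o(n))$. Multiplying the three factors gives $\tfrac1n\log|Q_n|\to 0$, which is item \eqref{item: strip criterion 2}, and Theorem \ref{thm: entropy criterion formulation} yields triviality.

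The main obstacle is precisely the tension exposed by this counting. Confining the walk buys only a ball of $\Theta(n)$ sites, on which the naive number of permutations is $\exp(\Theta(n\log n))$; restricting instead to the range leaves $\sim n/\log n$ active sites, on which the number of permutations is still $\exp(\Theta(n))$ because $s\log s\sim n$. Thus neither the range estimate nor the displacement estimate is individually sufficient, and the crux is to use them simultaneously while taking the confinement at the critical radius $\Theta(\sqrt n)$: a larger ball would make the subset choice cost $\exp(\Theta(n))$, while bounding displacement over the full ball without the range bound fails because the optimal $x$ cannot be pushed to $1$. Balancing the displacement budget $Dn$ against the support size $\Theta(n/\log n)$ through the choice $x=1-1/\log n$ is exactly what renders every factor subexponential.
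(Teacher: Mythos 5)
Your proposal is correct and follows essentially the same route as the paper's proof: confinement of the projected walk to a ball of radius $\Theta(\sqrt{n})$ via Kolmogorov's maximal inequality, a support-size bound of order $n/\log n$ via the Dvoretzky--Erd\H{o}s range estimate, a linear bound on $\mathrm{Disp}(F_n)$ coming from the finite support of $\mu$, and then Kaimanovich's conditional entropy criterion applied to deterministic sets $Q_n$ of subexponential size. The only deviations are technical conveniences on your side — you obtain the range bound from $\E(R_n)=O(n/\log n)$ plus Markov's inequality rather than the almost-sure law (Jain--Pruitt) used in the paper, and you count the permutations compatible with the displacement budget via a generating-function (Chernoff-type) bound with $x=1-1/\log n$ instead of the paper's stars-and-bars allocation followed by a maximization argument — and both variants are valid.
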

\begin{proof}
	By using Kolmogorov's maximal inequality on both coordinates of $\Z^2$, we see that for a large enough constant $C_1>0$ we have
	\begin{equation*}
		\P\left( \max_{1\le k\le n}\|S_k\|\le C_1 n^{1/2}  \right)>1/2
	\end{equation*}
	for all $n\ge 1$, where $\|\cdot \|$ denotes the $L_1$ norm on $\Z^2$. That is, $\|(x_1,x_2)\|=|x_1|+|x_2|$, for $(x_1,x_2)\in \Z^2$.
	
	We need to estimate the $n$-th instant $(F_n,S_n)$ of the random walk. Let us denote by $R_n\coloneqq \left|\{ S_0,S_1,\ldots,S_n\} \right| $
	the range of the induced random walk on $\Z^2$. In other words, $R_n$ is the number of distinct elements of $\Z^2$ visited up to time $n$. Since we assume that $\mu$ has finite support, the following limit
	\begin{equation*}\label{eq: range recurrent random walk on Z2}
		\lim_{n\to \infty}\frac{R_n}{n/\log(n)}
	\end{equation*}
	exists almost surely and is positive. This was first proven for the simple random walk on $\Z^2$ in \cite{DvoretzkyErdos1951}. The same arguments in that paper prove that for any finitely supported $\mu$ such that the induced random walk on $\Z^2$ is recurrent, one has $\E(R_n)=C \frac{n}{\log(n)}+o\left(\frac{n}{\log(n)}\right)$ for a constant $C>0$. In addition, it is proven in \cite{JainPruitt1972} that for $\mu$ as above, the strong law of large numbers $\lim_{n\to \infty}\frac{R_n}{E(R_n)}=1$ holds almost surely. The combination of both results implies the statement above. In particular for our proof, the above implies that there exists a constant $C_2>0$ such that almost surely for every large enough $n$ we have $R_n<C_2\frac{n}{log(n)}$.

	With this, the $n$-th instant of the random walk is determined by choosing at most $C_2\frac{n}{\log(n)}$ elements of $\Z^2$ to visit inside the ball of radius $C_1\sqrt{n}$, then choosing one of these elements to be the final position $S_n$ of the random walk on $\Z^2$, and finally choosing the values for the permutation coordinate $F_n$. The first two terms above give a factor of
	\begin{equation*}
		{ (C_1\sqrt{n}+1)^2\choose C_2\frac{n}{\log(n)}}\cdot C_2\frac{n}{\log(n)},
	\end{equation*}
	which grows subexponentially. Indeed, we note that both $\frac{1}{n}\log (n/\log(n))=\frac{1}{n}\log(n)- \frac{1}{n}\log \log n$, and
	
	\begin{equation*}
		\frac{1}{n}\log { (C_1\sqrt{n}+1)^2\choose C_2\frac{n}{\log(n)}}
	\end{equation*}
	
	converge to $0$, as it follows from Stirling's approximation.
	
	In order to estimate the number of possible values for the permutation coordinate $F_n$, we look at the displacement function. Note that we have $\sum_{x\in \Z^2}\|x-F_n(x)\| \le C_3 n,$ for some constant $C_3>0$. Indeed, since $\mu$ has finite support, each increment will map each element to its image in a uniformly bounded neighborhood, and so the total displacement at every step is bounded by a fixed constant.
	
	With the above, we can associate to the permutation $F_n(x)$ values $d_x\ge 0$, for each $x$ in the support of size $C_2\frac{n}{\log(n)}$, whose sum must be at most $C_3n$. The total number of ways of assigning these numbers to a fixed support is given by
	$$
	{C_3n + C_2\frac{n}{\log(n)} -1 \choose C_3 n}.
	$$
	
	Suppose that we have fixed a support of size $C_2\frac{n}{\log(n)}$ as well as the displacements of each element $\{d_x\}_x$. We remark that for every element $x$, its image $F_n(x)$ can be any element that satisfies $\|x-F_n(x)\|=d_x$. For a fixed value of $d_x$ there are $2d_x+1$ such elements, and hence the total number of permutations for this fixed support and displacement is bounded above by
	$\prod_{i=1}^{C_2\frac{n}{\log(n)}}(2d_i+1).$ Since we have the constraint $\sum_{i=1}^{C_2\frac{n}{\log(n)}}d_i=C_3 n$, the above product is maximized when all the values $2d_i+1$ are equal, and hence we have the upper bound
	\begin{equation*}
		\left(\frac{2C_3n+C_2\frac{n}{\log(n)}}{C_2\frac{n}{\log(n)}}\right)^{C_2\frac{n}{\log(n)}}.
	\end{equation*}
	This also grows subexponentially.

	In conclusion, our analysis shows that with a probability of at least $1/2$, the $n$-th instant $(F_n,S_n)$ of the random walk on $\extsym{\Z^2}$ belongs to a set $A_n$ of size bounded above by
	{ \begin{equation*}
			{ (C_1\sqrt{n}+1)^2\choose \frac{C_2n}{\log(n)}}\cdot\frac{ C_2n}{\log(n)}\cdot  {C_3n +\frac{C_2n}{\log(n)} -1 \choose C_3 n}\cdot  	\left(\frac{2C_3n+\frac{C_2n}{\log(n)}}{\frac{C_2n}{\log(n)}}\right)^{\frac{C_2n}{\log(n)}},
	\end{equation*}}
	which grows subexponentially. We can thus apply Theorem \ref{thm: entropy criterion formulation} to conclude that the Poisson boundary is trivial.
\end{proof}

\bibliographystyle{alpha}
\bibliography{biblio}
\end{document}